\documentclass[feqn,openany,11pt]{article}
\oddsidemargin 11mm \evensidemargin 11mm
\usepackage{amsfonts}
\usepackage{amssymb}
\usepackage{xcolor}
\usepackage{amscd}
\usepackage{mathrsfs}
\usepackage{graphicx}
\usepackage{indentfirst,latexsym,amssymb}
\usepackage[bookmarks=false]{hyperref}

\usepackage{makeidx}
\usepackage{amsmath}
\usepackage{amsthm}
\usepackage{epsfig}
\usepackage{graphicx}
\usepackage{titlesec}
\usepackage{bm}
\usepackage{psfrag}
\usepackage{latexsym}
\usepackage{subfigure}
\usepackage{fancyhdr}
\usepackage{esint}
\usepackage[numbers,sort&compress]{natbib}

\topmargin=0.6cm
\textwidth  15 cm \textheight 22 cm
%\usepackage{hyperref}
%
% Recommended: If your bibliography contains web page URLs, the url package
% improves their appearance (e.g., better line breaking):
%\usepackage{url}
\usepackage{fancyhdr}

\usepackage{makeidx}
\usepackage{dsfont}
\usepackage{titlesec}
\usepackage{bm}
\usepackage{amscd}
\usepackage{amsmath}
\usepackage{latexsym}
\usepackage{amsfonts}
\usepackage{amssymb}
\usepackage{amsthm}
\usepackage{graphicx}
\usepackage{a4wide,amsfonts,amssymb,amsmath,amsthm, amssymb}
\usepackage{verbatim}
\usepackage{mathrsfs}
\topmargin=0.6cm

 \oddsidemargin=1.0cm
 \evensidemargin=1.0cm
 \topmargin=0.0cm
 \textwidth=14cm
 \textheight=22cm
 %\parindent=2\ccwd
\textwidth  15 cm \textheight 22.5 cm

\numberwithin{equation}{section}
\allowdisplaybreaks

\makeatletter

\newcommand{\Rmnum}[1]{\expandafter\@slowromancap\romannumeral #1@}

\makeindex          % Switches indexing on (write to \jobname.idx)
    % ����
    % ������
     % ����

\theoremstyle{plain}
\theoremstyle{definition}\newtheorem{theorem}{Theorem}[section]
\theoremstyle{definition}
\theoremstyle{definition}
\theoremstyle{definition}
\theoremstyle{plain}\newtheorem{lemma}[theorem]{Lemma}
\theoremstyle{plain}
\theoremstyle{plain}\newtheorem{proposition}[theorem]{Proposition}
\theoremstyle{definition}\newtheorem{remark}{Remark}[section]

\newcommand{\be}{\begin{equation}}
\newcommand{\ee}{\end{equation}}
\newcommand{\ba}{\begin{aligned}}
\newcommand{\ea}{\end{aligned}}

\newcommand{\ben}{\begin{enumerate}}
\newcommand{\een}{\end{enumerate}}

\def\ef#1 {$(\ref{#1})$}

\begin{document}

%%%%%%%%%%%%%%%%%%%%%%%%%%%%%%%%%%%%%%%%%%%%%%%%%%%%%%%%%%%%%%%%%%%%%%%%%%%%%%%%%%%%%%%%%%%%%%%

%%%%%%%%%%%%%%%%%%%%%%%%%%%%%%%%%%%%%%%%%%%%%%%%%%%%%%%%%%%%%%%%%%%%%%%%%%%%%%%%%%%%%%%%%%%%%

\title{On non-resistive limit of 1D MHD equations with no vacuum at infinity}
\author{Zilai Li$^{1}$, Huaqiao Wang$^2$,  Yulin Ye$^{3}$\thanks{Corresponding author.}}
\date{\today}
\maketitle
\begin{center}
	
	$^{1}$School of Mathematics and Information Science, Henan Polytechnic University, \\Jiaozuo   454000, P.R. China. \\{\tt Email:lizl@hpu.edu.cn}\\
	$^2$College of Mathematics and Statistics, Chongqing University,\\ Chongqing 401331, P.R. China.\\  {\tt Email:wanghuaqiao@cqu.edu.cn}\\
	$^{3}$School of Mathematics and Statistics, Henan University,\\ Kaifeng  475004, P.R. China. \\{\tt Email:ylye@vip.henu.edu.cn}
\end{center}

%-----------------------------------------------------------------------------------
\pagestyle{myheadings} \thispagestyle{plain}\markboth{}{On non-resistive limit of 1D MHD equations with no vacuum at infinity}

\vspace{3mm}

\textbf{Abstract:}
In this paper, we consider the Cauchy problem for the one-dimensional compressible isentropic magnetohydrodynamic (MHD) equations with no vacuum at infinity, but the initial vacuum can be permitted inside the region. By deriving a priori $\nu $ (resistivity coefficient)-independent estimates, we establish the non-resistive limit of the global strong solutions with large initial data. Moreover, as a by-product, the global well-posedness of strong solutions for both the compressible resistive  MHD equations
and non-resistive MHD equations are also established, respectively.

\textbf{Keywords:} 1D compressible MHD equations;
Cauchy problem; global strong solutions; non-resistive limit.

%%%%%%%%%%%%%%%%%%%%%%%%%%%%%%%%%%%%%%%%%%%%%%%%%%%%%%%%%%%%%%%%%%%%%%%%%%%%%%%%%%%%%%%%%%%%%%%%%%%%%%%%%%%%%%%%%%%%%%%%%%%%%
\section{Introduction and Main Results}
Compressible magnetohydrodynamics (MHD) is used to describe the macroscopic behavior of the electrically conducting fluid in a magnetic field.
The application of MHD has a very wide of physical objects from liquid metals to cosmic plasmas. The system of the resistive MHD equations has the form:
\begin{equation}\label{a2}
\begin{cases}
\rho_t+(\rho u)_x=0,\\
(\rho u)_t+(\rho u^2+P(\rho)+\frac{1}{2}b^2)_x=(\mu u_x)_x,\\
b_t+(ub)_x=\nu b_{xx},
\end{cases}
\end{equation}
 in $\mathbb{R}\times [0,\infty)$. Here, $\rho,u, P(\rho)$ and $b$ denote the density, velocity,
pressure and magnetic field, respectively.  $\mu>0$ is the viscosity, the constant $\nu>0$ is the resistivity coefficient
acting as the magnetic diffusion coefficient of the magnetic field. In this paper, we consider the isentropic
compressible MHD equations in which the equation of the state has
the form $$P(\rho)=R\rho^\gamma,\,\;\gamma>1.$$ For simplicity, we set
$R=1$. We focus on the initial condition:
 \begin{equation}\label{a2-1}
 \begin{aligned}
 (\rho,u,b)|_{t=0}=(\rho_{0}(x),u_{0}(x),b_{0}(x))\rightarrow (\bar{\rho},0,\bar{b})\text{ as } |x|\rightarrow+\infty,
 \end{aligned}
 \end{equation}
 where  $\bar{\rho}$ and $\bar{b}$ are both non-zero constants. Note that we can always normalize $\bar{\rho}$ such that $\bar{\rho}\geq 1$.

 However, it is well known that the resistivity coefficient $\nu$ is inversely proportional to the electrical conductivity,
therefore it is more reasonable to ignore the magnetic diffusion which means $\nu=0$, when the conducting fluid considered is of highly conductivity,
for example the ideal conductors. So instead of equations \eqref{a2}, when there is no resistivity, the system reduces to the so called compressible,
isentropic, viscous and non-resistive MHD equations:
\begin{equation}\label{a1}
\begin{cases}
\tilde{\rho}_t+(\tilde{\rho} \tilde{u})_x=0,\\
(\tilde{\rho} \tilde{u})_t+(\tilde{\rho} \tilde{u}^2+P(\tilde{\rho})+\frac{1}{2}\tilde{b}^2)_x=(\mu \tilde{u}_x)_x,\\
\tilde{b}_t+(\tilde{u}\tilde{b})_x=0,
\end{cases}
\end{equation}
in $\mathbb{R}\times [0,+\infty)$, with the following initial condition:
\begin{equation}\label{a1-1}
\begin{aligned}
(\tilde{\rho},\tilde{u},\tilde{b})|_{t=0}=(\tilde{\rho}_0,\tilde{u}_0,\tilde{b}_0)\rightarrow(\bar{\rho},0,\bar{b}) \text{ as }  |x|\rightarrow+\infty.
\end{aligned}
\end{equation}

Because of the tight interaction between the dynamic motion and the magnetic field, the presence of strong nonlinearities,
rich phenomena and mathematical challenges, many physicists and mathematics are attracted to study in this field.
Before stating our main theorems, we briefly recall some previous
known results on compressible MHD equations. Firstly, we begin with
the MHD equations with magnetic diffusion. For one-dimensional case,
Vol'pert-Hudjaev \cite{VH1972} proved the local existence and
uniqueness of strong solutions to the Cauchy problem and
Kawashima-Okada \cite{KO1982} obtained the global smooth solutions
with small initial data. For large initial data and the density containing vacuum,
Ye-Li \cite{YL2019} proved the global existence of strong solutions to the 1D Cauchy problem with vacuum at infinity.
When considering the full MHD equations and
the heat conductivity depends on the temperature $\theta$, Chen-Wang
\cite{CW2002} studied the free boundary value problem and
established the existence, uniqueness and Lipschitz dependence of
strong solutions. Recently, Fan-Huang-Li \cite{FHL2017} obtained the
global strong solutions to the initial boundary value problem to the
planner MHD equations with temperature-dependent heat conductivity.
Later, with the effect of self-gravitation as well as the influence
of radiation on the dynamics at high temperature regimes taken into
account, Zhang-Xie \cite{ZX2008} obtained the global strong
solutions to the initial boundary value problem for the nonlinear
planner MHD equations.  For multi-dimensional MHD equations,
Lv-Shi-Xu \cite{LSX2016} considered the 2-D isentropic MHD equations
and proved the global existence of classical solutions provided that
the initial energy is small, where the decay rates of the
solutions were also obtained. Vol'pert-Hudjaev \cite{VH1972} and
Fan-Yu \cite{FY2009} obtained the local classical solution to the
3-D compressible MHD equations with the initial density is strictly
positive or could contain vacuum, respectively. Hu-Wang
\cite{HW2010} derived the global weak solutions to the 3-D
compressible MHD equations with large initial data. Recently,
Li-Xu-Zhang \cite{LXZ2013} established the global existence of
classical solution of 3-D MHD equations with small energy
but possibly large oscillations. Later, the result was improved by
Hong-Hou-Peng-Zhu \cite{HHPZ2017} just provided
$((\gamma-1)^{\frac{1}{9}}+\nu ^{-\frac{1}{4}})E_0$ is suitably
small. When the resistivity is zero, then the magnetic equation is
reduced from the heat-type equation to the hyperbolic-type equation,
the problem becomes more challenging, hence the results are few.
Kawashima \cite{Kashima1983} obtained the classical solutions to 3-D
MHD equations when the initial data are of small perturbations in
$H^3$-norm and away from vacuum. Xu-Zhang
\cite{XZ2011} proved a blow-up criterion of strong solutions for 3-D
isentropic MHD equations with vacuum. Fan-Hu \cite{FH2015} established the global strong solutions to the initial boundary
value problem of 1-D heat-conducting MHD equations.
With more general heat-conductivity, Zhang-Zhao
\cite{ZZ2017} established the global strong solutions and also
obtained the non-resistivity limits of the solutions in $L^2$-norm.
Jiang-Zhang \cite{JZ2017} obtained the non-resistive limit of the strong solution and the ``magnetic boundary layer" estimates to the
initial boundary value problem of 1-D isentropic MHD equations  as the resistivity $\nu\rightarrow 0$.  Yu
\cite{Y2013} obtained the global existence of strong solutions to
the initial boundary value problem of 1-D isentropic MHD equations. For the Cauchy problem with large initial data and vacuum,
Li-Wang-Ye \cite{LWY2020} established the global well-posedness of strong solutions to the 1D isentropic MHD equations with vacuum at infinity,
that is $\bar{\rho}=0$. However, for the Cauchy problem with no vacuum at infinity, the global well-posedness of strong solutions and the non-resistive limits when the resistivity coefficient $\nu\rightarrow 0$ are still unknown. The goal of this paper is trying to answer these problems.

Now we give some comments on the analysis of this paper. The non-resistive limit of global strong solutions to 1D MHD equations \eqref{a2}-\eqref{a2-1} can be obtained by  global uniform a priori estimates which are independent of resistivity $\nu$.   Thus, to obtain the a priori $\nu $ (resistivity coefficient)-independent  estimates, some of the main new difficulties will be encountered due to the absence of resistivity and  the initial density and initial magnetic which approach non-zero constants at infinity.

It turns out that the key issue in this paper is to derive upper bound for the density, magnetic field and the time-dependent higher norm estimates which are independent of resistivity $\nu$. This is achieved by modifying upper bound estimate for the density developed in \cite{Ye2015} and \cite{YL2019} in the theory of Cauchy problem with vacuum  and initial magnetic approach zero at infinity. However, in comparison with the Cauchy problem with vacuum at infinity in \cite{Ye2015} and \cite{LWY2020}, some new difficulties will be encountered.
The first difficulty lies in no integrability for the density itself just from the elementary energy estimate (see Lemma \ref{lemmac1}), which is required when deriving the upper bound of the density. To overcome this difficulty, we use the technique of mathematical frequency decomposition to divide the momentum $\xi$ into two parts:
$$\xi=\int \rho udx=\int \left(\sqrt{\rho}-\sqrt{\bar{\rho}}\right)\sqrt{\rho}udx+\sqrt{\bar{\rho}}\int \sqrt{\rho}udx=\xi_1+\xi_2.$$
It is crucial to obtain the upper bound of $\xi_1$ and $\xi_2$. For getting $\|\xi_1\|_{L^\infty}$,
we use the technique of mathematical frequency decomposition to get the estimate of $\|\sqrt {\rho}-\sqrt{\bar{\rho}}\|_{L^2}$ by the elementary energy estimates, and then using  H\"{o}lder's inequality, we can obtain the upper bound of $\xi_1$ (see \eqref{c27}). For obtaining $\|\xi_2\|_{L^\infty}$, due to the Sobolev embedding theory, we need some $L^{\tilde{p}}$ integrability of $\xi_2$. However, we can not obtain it just from $\|\xi_{2x}\|_{L^2}$ directly,  because the Poinc\'{a}re type inequality
is no longer valid in the whole space $\mathbb{R}$. To overcome this difficulty, we use the Caffarelli-Kohn-Nirenberg weighted  inequality and the G-N inequality to obtain the upper bound of $\xi_2$ (see \eqref{c28}). It is worth noting that the non-resistive MHD equation \eqref{a1} looks similar to the compressible model for gas
and liquid two-phase fluids. Hence, in some previous results, it is technically assumed that $\tilde{\rho},\tilde{b}\geq 0$ and $0\leq \frac{\tilde{b}}{\tilde{\rho}}<\infty$
in \eqref{a1}, which implies the magnetic field $\tilde{b}$ is bounded provided the density $\tilde{\rho}$ is bounded.
However, this is not physical and realistic in magnetohydrodynamics. Moreover, compared with that for the Cauchy problem with vacuum at infinity in \cite{LWY2020},
since the magnetic field $\tilde{b}\rightarrow \bar{b}$, as $|x|\rightarrow +\infty$, the method of getting the upper bound of magnetic field $b$ in \cite{LWY2020}
can not be used here anymore. So another difficulty is how to get the uniform (independent of $\nu$) upper bound of the magnetic field $b$ without the assumption
similar as that in two-phase fluids.  To overcome this difficulty, we will make full use of the structure of the momentum equation and effective viscous flux (see Lemma \ref{lemmac4} and Lemma \ref{lemmac5}).

\noindent\emph{\bf Notations}: We denote the material derivative of $u$ and
effective viscous flux by
\begin{align*}
\dot{u}\triangleq u_t+uu_x \ \text{and}\
F\triangleq\mu u_x-\left(P(\rho)-P(\bar{\rho})+\frac{b^2-\bar{b}^{2}}{2}\right),
\end{align*}
and define potential energy by
\begin{align*}
\Phi(\rho)=\rho\int_{\bar{\rho}}^{\rho}\frac{P(s)-P(\bar{\rho})}{s^{2}}ds=\frac{1}{\gamma-1}\left(\rho^{\gamma}-\bar{\rho}^\gamma-\gamma \bar{\rho}^{\gamma-1}(\rho-\bar{\rho})\right).
\end{align*}
Sometimes we write $\int_{\mathbb R}f(x) dx$ as $\int f(x) $  for simplicity.

The main results of this paper can be stated as:

As a by-product, the first result is the global existence of strong solutions to 1D non-resistive MHD equations \eqref{a1}-\eqref{a1-1} when the
the initial density and initial magnetic approach non-zero constants at infinity.
 \begin{theorem}\label{theorem1.1}
	Suppose that the initial data $(\tilde{\rho}_0,\tilde{u}_0,\tilde{b}_0)(x)$ satisfies
	\begin{equation}\label{1.1}
	\begin{aligned}
	&\tilde{\rho}_0-\bar{\rho}\in  H^{1}(\mathbb{R}),\ \tilde{b}_0-\bar{b}\in H^1(\mathbb{R}),\ \tilde{u}_0\in H^2(\mathbb{R}),\\
	&\left(\frac{1}{2}\tilde{\rho}_{0} \tilde{u}_{0}^2+\Phi(\tilde{\rho}_{0})+\frac{(\tilde{b}_{0}-\bar{b})^2}{2}\right)|x|^\alpha\in L^1(\mathbb{R})
	\end{aligned}
	\end{equation}
	for $1<\alpha\leq 2$, and the compatibility condition
	
	\begin{equation}\label{1.2}
	\left(\mu\tilde{u}_{0x}-P(\tilde{\rho}_{0})-\frac{1}{2}\tilde{b}_{0}^{2}\right)_{x}=\sqrt{\tilde{\rho}_{0}}\tilde{g}(x),\ \ x\in \mathbb{R}
	\end{equation}
	with some $\tilde{g}$ satisfying $\tilde{g}\in
	L^{2}(\mathbb{R})$. Then for any $T>0$, there exists a unique global strong solution $(\tilde{\rho},\tilde{u},\tilde{b})$
	to the Cauchy problem \eqref{a1} and  \eqref{a1-1} such that
	$$0\leq \tilde{\rho} \leq C,\ (\tilde{\rho}-\bar{\rho},\tilde{b}-\bar{b})\in L^{\infty}(0,T;H^{1}(\mathbb{R})),\ \tilde{\rho}_t\in L^{\infty}(0,T;L^{2}(\mathbb{R})),$$
	$$\tilde{b}_t\in L^{2}(0,T;L^{2}(\mathbb{R})),\ \left(\frac{1}{2}\tilde{\rho} \tilde{u}^2+\Phi(\tilde{\rho})+\frac{(\tilde{b}-\bar{b})^2}{2}\right)|x|^\alpha\in L^{\infty}(0,T;L^{1}(\mathbb{R})),$$
	$$\tilde{u}\in L^{\infty}(0,T;H^{2}(\mathbb{R})),\ \sqrt{\tilde{\rho}}\tilde{u}_t\in L^{\infty}(0,T;L^{2}(\mathbb{R})),\ \tilde{u}_t\in L^2(0,T;H^{1}(\mathbb{R})).$$
	
\end{theorem}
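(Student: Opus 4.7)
The plan is to produce the solution as the $\nu\to 0$ limit of strong solutions of the resistive Cauchy problem \eqref{a2}--\eqref{a2-1}. Local-in-time existence for \eqref{a2} with the given data (and a mild regularization of $b_0$) follows from standard linearization and contraction. The substance of the proof is then a priori estimates on $[0,T]$ uniform in $\nu\in(0,1]$; once obtained, Aubin--Lions compactness yields the stated solution and regularity, and the compatibility condition \eqref{1.2} transfers $\sqrt\rho u_t\in L^\infty_t L^2_x$ up to $t=0$.

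I start from the basic energy identity, obtained by testing the momentum equation against $u$ and the induction equation against $b-\bar b$,
\begin{equation*}
\frac{d}{dt}\!\int\!\Bigl(\tfrac12\rho u^2+\Phi(\rho)+\tfrac12(b-\bar b)^2\Bigr)dx+\int\!\bigl(\mu u_x^2+\nu b_x^2\bigr)dx=0,
\end{equation*}
which is $\nu$-independent but gives no direct integrability of $\rho$ itself. To obtain the upper bound on $\rho$ I use Zlotnik's representation of $\mu\log\rho$ along Lagrangian trajectories in terms of the effective viscous flux $F$ and a cumulative momentum. Following the introduction, I split
\begin{equation*}
\int\rho u\,dx=\int(\sqrt\rho-\sqrt{\bar\rho})\sqrt\rho u\,dx+\sqrt{\bar\rho}\int\sqrt\rho u\,dx=\xi_1+\xi_2.
\end{equation*}
Because $(\sqrt\rho-\sqrt{\bar\rho})^2\lesssim\Phi(\rho)$ by Taylor expansion, the basic energy controls $\sqrt\rho-\sqrt{\bar\rho}$ in $L^2$, and H\"older yields $\xi_1$. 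For $\xi_2$ the Poincar\'e inequality fails on $\mathbb R$, so I combine a Caffarelli--Kohn--Nirenberg weighted inequality with the weighted moment $|x|^\alpha\Phi(\rho)$ provided by \eqref{1.1} and then apply Gagliardo--Nirenberg; feeding the result into Zlotnik's lemma yields the $\nu$-independent bound $\rho\le C$.

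The main obstacle is the uniform $L^\infty$ bound on $b$ without the unphysical assumption $b/\rho\le C$. I exploit the effective viscous flux structure directly. From $\mu u_x=F+(P(\rho)-P(\bar\rho))+\tfrac12(b^2-\bar b^2)$ and $b_t+(ub)_x=\nu b_{xx}$, testing against $b-\bar b$ and integrating produces
\begin{equation*}
\frac{d}{dt}\!\int\tfrac12(b-\bar b)^2 dx+\frac{1}{\mu}\!\int\!\Bigl(\tfrac12(b-\bar b)^2+\bar b(b-\bar b)\Bigr)^{\!2}dx+\nu\!\int b_x^2 dx=-\frac{1}{\mu}\!\int\!\Bigl(\tfrac12(b-\bar b)^2+\bar b(b-\bar b)\Bigr)\bigl(F+P(\rho)-P(\bar\rho)\bigr)dx,
\end{equation*}
whose left-hand side provides quartic dissipation in the regions where $|b-\bar b|$ is large. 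Since $F_x=\rho\dot u$ allows me to bound $F$ in $L^\infty_x$ via the momentum estimates, iterating on higher powers $(b-\bar b)^{2k}$ and passing $k\to\infty$ delivers the $\nu$-uniform bound $\|b\|_{L^\infty_{x,t}}\le C$. This is the step I expect to be technically the most delicate, because the quadratic self-coupling $\tfrac12 b^2$ inside $F$ must be absorbed with care, and the necessary $L^\infty_x$ control of $F$ has to be fed back into the $b$-estimate in a carefully balanced Gronwall loop.

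With $\rho$ and $b$ controlled uniformly, higher regularity follows by the standard cascade. Testing the momentum equation against $\dot u$ yields $\sqrt\rho\dot u\in L^\infty_tL^2_x$ and $\dot u\in L^2_tH^1_x$, which upgrade to $u\in L^\infty_tH^2_x$, $\sqrt\rho u_t\in L^\infty_tL^2_x$, and $u_t\in L^2_tH^1_x$; differentiating the continuity and induction equations and testing against $\rho_x$ and $(b-\bar b)_x$ propagates the $H^1$ bounds on $\rho-\bar\rho$ and $b-\bar b$, while the weighted moment $|x|^\alpha$ is preserved through the weighted energy inequality whose error terms sit below the dissipation. Uniqueness of the limit solution is standard via an $L^2$ difference estimate using \eqref{1.2}. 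Letting $\nu\to 0$ via Aubin--Lions then completes the proof of Theorem \ref{theorem1.1}.
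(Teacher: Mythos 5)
Your overall strategy differs from the paper's in two substantive ways, and both differences are worth spelling out. First, the paper proves Theorem \ref{theorem1.1} \emph{directly}: it observes that every a priori estimate of Section \ref{s3} (Lemmas \ref{lemmac1}--\ref{lemmac7}) is independent of $\nu$ and therefore applies verbatim to the non-resistive system with $\nu=0$ (this is Proposition \ref{p4.1}), after which a local-existence-plus-continuity argument closes the proof; no limit process is involved. You instead build $(\tilde\rho,\tilde u,\tilde b)$ as the $\nu\to 0$ limit of resistive solutions via Aubin--Lions. That is logically admissible and does not create circularity with Theorem \ref{theorem1.2}, but it forces you to carry extra baggage the paper avoids: compactness at the $H^2$ level for $u$, attainment of the original initial data after your ``mild regularization of $b_0$'', and preservation of the compatibility condition \eqref{1.2} under that regularization. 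Second, for the key uniform bound on $b$ you propose a Moser iteration on $\int (b-\bar b)^{2k}$, whereas the paper stops at $L^4$ and $L^6$ (Lemma \ref{lemmac4}, obtained by testing the induction equation with $(b-\bar b)^3$ and using the effective viscous flux), then derives a $\nu$-uniform bound on $\|b_x\|_{L^2}$ by coupling $\partial_x$ of the induction equation with the momentum equation tested against $u_{xx}$ (Lemma \ref{lemmac5}), and concludes $\|b\|_{L^\infty}\le \bar b+C\|b-\bar b\|_{L^2}^{1/2}\|b_x\|_{L^2}^{1/2}$ by Gagliardo--Nirenberg. Your route is plausible but its one genuinely underspecified step is exactly this iteration: the ``quartic dissipation'' in your displayed identity equals $\frac{1}{4\mu}\int (b-\bar b)^2(b+\bar b)^2$, which is nonnegative but degenerates on $\{b=-\bar b\}$, and for the higher powers $(b-\bar b)^{2k-1}$ the analogous coefficient tends to $\tfrac12(w+\bar b)(w+2\bar b)$ with $w=b-\bar b$, which changes sign; moreover $\|F\|_{L^\infty_x}$ itself requires $\|\sqrt{\rho}\dot u\|_{L^2}$ and $\|b-\bar b\|_{L^4}$, so the $\dot u$ estimate, the higher integrability of $b-\bar b$, and the iteration must all be closed inside a single Gronwall argument. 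These obstructions are absorbable (the sign-indefinite region is confined to $|b-\bar b|\le 2|\bar b|$ and the Young constants can be kept uniform in $k$), but you should either carry the iteration out explicitly or adopt the paper's $L^4$--$L^6$--$H^1$ chain, which is more economical and in any case has to be run to obtain the $H^1$ bound on $\tilde b-\bar b$ that the theorem asserts.
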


The second result is the global existence and non-resistive limits of strong solutions to 1D  MHD equations \eqref{a2}-\eqref{a2-1} when the
the initial density and initial magnetic approach non-zero constants at infinity.
\begin{theorem}\label{theorem1.2}
	Suppose that the initial data $(\rho_0,u_0,b_0)(x)$ satisfies
	\begin{equation}\label{a3}
	\begin{aligned}
	&\rho_0-\bar{\rho}\in  H^{1}(\mathbb{R}),\ b_0-\bar{b}\in H^1(\mathbb{R}),\ u_0\in H^2(\mathbb{R}),\\
	&\left(\frac{1}{2}\rho_{0} u_{0}^2+\Phi(\rho_{0})+\frac{(b_{0}-\bar{b})^2}{2}\right)|x|^\alpha\in L^1(\mathbb{R})
	\end{aligned}
	\end{equation}
	for $1<\alpha\leq 2$, and the compatibility condition
	
	\begin{equation}\label{a33}
	\left(\mu u_{0x}-P(\rho_{0})-\frac{1}{2}b_{0}^{2}\right)_{x}=\sqrt{\rho_{0}}g(x),\ \ x\in \mathbb{R}
	\end{equation}
	with some $g$ satisfying $g\in
	L^{2}(\mathbb{R})$. Then for each fixed $\nu>0$, there exist a positive constant $C$ and  a unique global strong solution $(\rho,u,b)$
	to the Cauchy problem \eqref{a2}-\eqref{a2-1} such that
	 \begin{equation}\label{a2-2}
	 0\leq \rho (x,t)\leq C,\ \ \ \forall (x,t)\in \mathbb{R}\times [0,T],
	 \end{equation}

	\begin{equation}\label{a2-3}
	\begin{aligned}
	\sup_{0\leq t\leq T}&\left\|\left(\frac{1}{2}{\rho} u^2+\Phi(\rho)+\frac{(b-\bar{b})^2}{2}\right)(1+|x|^\alpha)\right\|_{L^1}(t)\\
	&\quad+\int_0^T \left(\mu\left\| u_x(1+|x|^{\frac{\alpha}{2}})\right\|_{L^2}^2+\nu\left\| b_x(1+|x|^{\frac{\alpha}{2}})\right\|_{L^2}^2\right)dt\leq C,
	\end{aligned}
	\end{equation}
	and
	\begin{equation}\label{a2-4}
	\begin{aligned}
	\sup_{0\leq t\leq T}&\left(\|u\|_{L^2}+\|u_{xx}\|_{L^2}+\|u_x\|_{L^2}+\|b_x\|_{L^2}+\|\rho_x\|_{L^2}+\|\sqrt{\rho}\dot{u}\|_{L^2}\right)(t)\\
	&+\int_0^T\left(\|u_{xt}\|_{L^2}^2+\nu\|b_{xx}\|_{L^2}^2\right)dt\leq C.
	\end{aligned}
	\end{equation}
	Moreover, as $\nu\rightarrow0$, we have
	\begin{equation}\label{a2-5}
	\begin{cases}
	(\rho,u,b)\rightarrow(\tilde{\rho},\tilde{u},\tilde{b})\text{ strongly in } L^\infty(0,T;L^2),\\
	\nu b_x\rightarrow0,\ \ u\rightarrow\tilde{u},\ \  u_x\rightarrow\tilde{u}_x\text{ strongly in } L^2(0,T;L^2),\\
	\end{cases}
	\end{equation}
	and
	\begin{equation}\label{a2-6}
		\begin{aligned}
	\sup_{0\leq t\leq T}&\left(\|\rho-\tilde{\rho}\|_{L^2}^2+\|u-\tilde{u}\|_{L^2}^2+\|b-\tilde{b}\|_{L^2}^2\right)+\int_0^T\mu\|(u-\tilde{u})_x\|_{L^2}^2dt \leq C\nu,
	\end{aligned}
	\end{equation}
	where $C$ is a positive constant independent of $\nu$.
\end{theorem}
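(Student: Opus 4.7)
The plan is to construct the solution by local existence plus continuation, with the continuation driven by a priori estimates that are uniform in $\nu\in(0,1]$. Given the compatibility condition \eqref{a33}, a local strong solution exists by standard linearization, so the crux is proving the uniform bounds \eqref{a2-2}--\eqref{a2-4}; once these are in hand, the limit \eqref{a2-5}--\eqref{a2-6} follows by a compactness/energy argument on the difference of the two systems. The first step is to multiply the momentum equation by $u$, the magnetic equation by $b-\bar b$, and use the continuity equation together with the potential energy $\Phi(\rho)$ to get the basic energy identity. The weighted version in \eqref{a2-3} is obtained by testing against $(1+|x|^\alpha)$ and absorbing the commutator terms; this is where the assumption $1<\alpha\leq 2$ and the initial weighted integrability enter. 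The $\nu$-dependent term $\nu\|b_x(1+|x|^{\alpha/2})\|_{L^2}^2$ sits on the left with a favorable sign, so it does not obstruct uniformity.

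The central step is the $\nu$-independent $L^\infty$ bound on $\rho$. Writing the continuity equation as $(\log\rho)_t+u(\log\rho)_x=-u_x$ and using the effective viscous flux to replace $\mu u_x$ with $F+P(\rho)-P(\bar\rho)+(b^2-\bar b^2)/2$, one gets an ODE along the characteristic curves of $u$ whose right-hand side depends only on $F$ and on a monotone pressure/magnetic term. Zlotnik's lemma then yields $\rho\leq C$ provided $\int_0^t\|F\|_{L^\infty}\,ds$ is controlled. To bound $\|F\|_{L^\infty}$, the authors' strategy is to split the momentum $\xi=\int\rho u\,dx=\xi_1+\xi_2$ with $\xi_1=\int(\sqrt\rho-\sqrt{\bar\rho})\sqrt\rho u$ and $\xi_2=\sqrt{\bar\rho}\int\sqrt\rho u$: Hölder and a frequency decomposition of $\sqrt\rho-\sqrt{\bar\rho}$ take care of $\xi_1$, while for $\xi_2$ the Caffarelli--Kohn--Nirenberg weighted inequality combined with the Gagliardo--Nirenberg inequality produces an $L^{\tilde p}$ bound that a pure $L^2$ derivative estimate could not give (since Poincaré fails on $\mathbb{R}$). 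Feeding these into $F_x=\rho\dot u$ inverted gives the required $L^\infty$ bound on $F$, hence on $\rho$.

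Next, I need to bound $b$ in $L^\infty$ uniformly in $\nu$, which is the main obstacle of the paper since one cannot invoke the unphysical ratio assumption $b/\rho\le C$ that was available in two-phase-fluid style arguments, and the method used in \cite{LWY2020} relied on $\bar b=0$. The plan is to exploit the structure of the momentum equation and the definition of $F$: once $F\in L^\infty$ and $\rho\in L^\infty$ are known, the identity $b^2-\bar b^2=-2F+2\mu u_x-2(P(\rho)-P(\bar\rho))$ shows that an $L^\infty$ bound on $b$ follows from an $L^\infty$ bound on $u_x$ (modulo already-controlled quantities). Coupling this with the magnetic equation (multiplied by $b^{2k-1}$ for large $k$ to get an $L^p$ hierarchy, or directly by $b-\bar b$ after differentiation) and a Grönwall argument closes the estimate without any diffusive help from $\nu b_{xx}$. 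The delicacy is that all the constants produced in this bootstrap must be traced to depend only on the data and not on $\nu$.

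With $\rho,b\in L^\infty$ uniform in $\nu$, the higher order estimates in \eqref{a2-4} follow by standard routes: testing the momentum equation against $\dot u$ controls $\|\sqrt\rho\,\dot u\|_{L^2}$ and $\|u_{xt}\|_{L^2}$, differentiating the continuity and magnetic equations in $x$ controls $\|\rho_x\|_{L^2}$ and $\|b_x\|_{L^2}$ (here the magnetic $H^2$ estimate $\nu\|b_{xx}\|_{L^2}^2$ is $\nu$-weighted, which is all that is claimed), and elliptic regularity on the momentum equation upgrades to $\|u_{xx}\|_{L^2}$. For the non-resistive limit, I set $R=\rho-\tilde\rho$, $U=u-\tilde u$, $B=b-\tilde b$, derive their evolution equations, and perform an $L^2$ energy estimate; the only term that is not absorbed on the left is the forcing $\nu\tilde b_{xx}$ in the $B$-equation, which upon testing against $B$ becomes $-\nu\int\tilde b_x B_x$ plus controlled terms, contributing $O(\nu)$ after Cauchy--Schwarz and the uniform $H^1$ bound on $\tilde b$. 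Grönwall then gives \eqref{a2-6}, and interpolation with the uniform bounds delivers the strong convergences in \eqref{a2-5}.
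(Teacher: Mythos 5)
Your overall architecture (energy and weighted energy estimates, a pointwise density bound, an effective-viscous-flux argument for the magnetic field, higher-order estimates, then an $L^2$ estimate on the difference of the two systems) matches the paper, and you correctly name the key tools: the splitting $\xi=\xi_1+\xi_2$, the Caffarelli--Kohn--Nirenberg and Gagliardo--Nirenberg inequalities, and the substitution $\mu u_x=F+P(\rho)-P(\bar\rho)+\tfrac{b^2-\bar b^2}{2}$. However, the two central steps are wired together in an order that is circular. For the density bound you propose Zlotnik's lemma, conditional on controlling $\int_0^t\|F\|_{L^\infty}\,ds$, and you claim the $\xi_1+\xi_2$ decomposition feeds into ``$F_x=\rho\dot u$ inverted'' to give $\|F\|_{L^\infty}$. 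But $\|F\|_{L^\infty}$ is obtained from $\|F\|_{L^2}^{1/2}\|F_x\|_{L^2}^{1/2}$ with $F_x=\rho\dot u$, and the bound on $\|\sqrt{\rho}\dot u\|_{L^2}$ (Lemma \ref{lemmac4}) already uses the upper bound on $\rho$ (Lemma \ref{lemmac3}); your route therefore assumes what it is trying to prove. The paper's actual argument needs no information about $F$ at this stage: setting $\xi=\int_{-\infty}^x\rho u\,dy$ and integrating the momentum equation in $x$ gives $\frac{d}{dt}(\xi+\mu\ln\rho)=-\big(P(\rho)+\tfrac{b^2}{2}\big)+P(\bar\rho)+\tfrac{\bar b^2}{2}\le C$ along particle trajectories, so $\ln\rho\le C-\xi/\mu+\cdots$, and the whole point of the $\xi_1+\xi_2$ splitting (with CKN and the weighted energy of Lemma \ref{lemmac2}) is to bound $\|\xi\|_{L^\infty}$ itself, not $\|F\|_{L^\infty}$.

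A similar ordering problem affects your magnetic bound: you propose to read $b^2-\bar b^2$ off from $F$, $P$, and an $L^\infty$ bound on $u_x$, but a uniform-in-time $\|u_x\|_{L^\infty}$ requires $\sup_t\|u_{xx}\|_{L^2}$, which in the paper only comes in Lemma \ref{lemmac7}, after (and using) the $\sup_t\|b_x\|_{L^2}$ and $\sup_t\|\rho_x\|_{L^2}$ bounds you are trying to establish. The paper instead tests the magnetic equation with $(b-\bar b)^3$ and substitutes $u_x=(F+\cdots)/\mu$ so that a coercive term $\tfrac{3}{8\mu}\int(b-\bar b)^6$ appears on the left, yielding $\|b-\bar b\|_{L^6}^6\in L^1_t$; it then closes $\sup_t(\|b_x\|_{L^2}+\|\rho_x\|_{L^2})$ by Gronwall using $\|F\|_{L^\infty}\le C(1+\|\sqrt{\rho}\dot u\|_{L^2}+\|u_x\|_{L^2}+\|b-\bar b\|_{L^6}^{3/2})$, and only then deduces $\|b\|_{L^\infty}\le C\|b-\bar b\|_{L^2}^{1/2}\|b_x\|_{L^2}^{1/2}+\bar b$. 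Finally, in the limit argument the forcing in the equation for $b-\tilde b$ is $\nu b_{xx}$, not $\nu\tilde b_{xx}$ (the non-resistive solution has no diffusion term); the $O(\nu)$ rate comes from $\int_0^T\nu^2\|b_{xx}\|_{L^2}^2\,dt\le\nu\cdot C$, a consequence of the $\nu$-weighted bound $\int_0^T\nu\|b_{xx}\|_{L^2}^2\,dt\le C$ of Lemma \ref{lemmac5}. Your integration-by-parts variant can be made to give the same rate using the sign of $-\nu\|b_x\|_{L^2}^2$, but as written it rests on a term that does not appear in the difference system.
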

\begin{remark}
In Theorems \ref{theorem1.1} and \ref{theorem1.2}, we do not need the artificial assumption similarly as that in two-phase fluids. Moreover, if
ignoring the magnetic field, then MHD system reduces to the compressible Navier-Stokes equations. So, Theorems \ref{theorem1.1} and \ref{theorem1.2}
can be seen as an extension of that in \cite{Ye2015}.
\end{remark}
\begin{remark}
In Theorem \ref{theorem1.2}, we give that the global strong solution of resistive MHD equation \eqref{a2}-\eqref{a2-1} converges
to that of non-resistive MHD equation \eqref{a1}-\eqref{a1-1} in $L^2$-norm as $\nu \rightarrow 0$, moreover, the convergence rates are also justified.
\end{remark}

The rest of the paper is organized as follows. In Section \ref{s2},
we recall some preliminary lemmas which will be used later. Section
\ref{s3} is devoted to establishing global $\nu$-independent estimates for \eqref{a2} and \eqref{a2-1},
which will be used to justify the non-resistive limit. Sections \ref{s4} and \ref{s5}
are devoted to proving  Theorem \ref{theorem1.1} and Theorem \ref{theorem1.2}, respectively.

%%%%%%%%%%%%%%%%%%%%%%%%%%%%%%%%%%%%%%%%%%%%%%%%%%%%%%%%%%%%%%%%%%%%%%%%%%%%%%%%%%%%%%%%%%%%%%%%%%%%%%%%%%%%%%%%%%%%%%%%%%%%
\section{Preliminaries}\label{s2}
In this section, we will recall some known facts and elementary inequalities that will be used frequently later.

 The following well-known  inequality will be used frequently later.
\begin{lemma}
[\bf Gagliardo-Nirenberg inequality \cite{Gagliardo,Nirenberg}]\label{lemmab1} For any $f\in W^{1,m}({\mathbb{R}})\cap L^r({\mathbb{R}})$, there exists some generic
 constant $C>0$ which may depend  on $q,r$ such that
\begin{equation}\label{b1-1}
\|f\|_{L^q}\le C \|f\|_{L^r}^{1-\theta}\|\nabla
f\|_{L^m}^{\theta} ,
\end{equation}
where $\theta=(\frac{1}{r}-\frac{1}{q})(\frac{1}{r}-\frac{1}{m}+1)^{-1}$, if $m=1$, then $q\in[r,\infty)$, if $m>1$, then $q\in[r,\infty]$.
\end{lemma}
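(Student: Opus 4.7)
The plan is to establish the stated one-dimensional Gagliardo--Nirenberg inequality by first reducing to $f\in C_c^\infty(\mathbb R)$ via a standard density argument and then combining an $L^\infty$ control with an interpolation between $L^r$ and $L^\infty$. Before doing any analysis, I would verify the exponent $\theta$ by a scaling check: inserting $f_\lambda(x):=f(\lambda x)$ into the claimed inequality and matching powers of $\lambda$ on both sides gives the linear constraint
\[
-\frac{1}{q}\;=\;-\frac{1-\theta}{r}+\theta\Bigl(1-\frac{1}{m}\Bigr),
\]
whose unique solution is exactly the $\theta$ stated in the lemma. This pins $\theta$ down as the only admissible exponent and provides a clean consistency check for the calculation below.

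Next I would establish an $L^\infty$ bound. For $f\in C_c^\infty$ and any $s\ge 1$, the fundamental theorem of calculus yields the pointwise identity
\[
|f(x)|^s\;=\;s\int_{-\infty}^{x}|f(y)|^{s-1}\,\mathrm{sgn}(f(y))\,f'(y)\,dy\;\le\;s\int_{\mathbb R}|f|^{s-1}|f'|\,dy.
\]
When $m>1$, H\"older's inequality with conjugate exponents $p_1,p_2$ chosen so that $p_2=m$ and $(s-1)p_1=r$, i.e.\ $s=1+r(m-1)/m$, gives $\|f\|_{L^\infty}^s\le s\|f\|_{L^r}^{s-1}\|f'\|_{L^m}$. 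Taking $s$-th roots is the $q=\infty$ case of the lemma with exponent $1/s=(1/r)/(1/r-1/m+1)$. When $m=1$, the same identity with $s=1$ degenerates to $\|f\|_{L^\infty}\le\|f'\|_{L^1}$ directly, with no Hölder step required.

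Finally, for $r\le q<\infty$ I would interpolate between $L^r$ and $L^\infty$: from $\int|f|^q\le\|f\|_{L^\infty}^{q-r}\int|f|^r$ I obtain $\|f\|_{L^q}\le\|f\|_{L^\infty}^{1-r/q}\|f\|_{L^r}^{r/q}$, plug in the $L^\infty$ estimate from the previous step, and verify that the exponent on $\|f'\|_{L^m}$ simplifies to $(1/r-1/q)/(1/r-1/m+1)=\theta$, matching the scaling check. A standard mollification argument then extends the inequality from $C_c^\infty$ to all $f\in W^{1,m}(\mathbb R)\cap L^r(\mathbb R)$. The only point of genuine care is the algebraic bookkeeping that links the H\"older choice $s=1+r(m-1)/m$ to the claimed $\theta$ after the interpolation step; the endpoint ranges emerge naturally from the argument, since $q=\infty$ is admissible for $m>1$ (one has $s<\infty$), whereas $m=1$ forces $s=1$ so the interpolation identity used here covers only $q\in[r,\infty)$, exactly as stated.
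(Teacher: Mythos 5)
The paper does not prove this lemma at all; it is quoted as a classical result with references to Gagliardo and Nirenberg, so there is no in-paper argument to compare against. Your proof is correct and is the standard one-dimensional argument: the scaling check correctly forces $\theta=(\frac1r-\frac1q)(\frac1r-\frac1m+1)^{-1}$; the pointwise identity $|f(x)|^{s}\le s\int|f|^{s-1}|f'|$ combined with H\"older's inequality for $s=1+r(m-1)/m$ gives the $q=\infty$ case when $m>1$ (and $\|f\|_{L^\infty}\le\|f'\|_{L^1}$ when $m=1$); and the elementary interpolation $\|f\|_{L^q}\le\|f\|_{L^\infty}^{1-r/q}\|f\|_{L^r}^{r/q}$ followed by the algebra you describe indeed reproduces the stated exponent $\theta$ for all $q\in[r,\infty)$ — I checked that the exponents on both $\|f'\|_{L^m}$ and $\|f\|_{L^r}$ come out as $\theta$ and $1-\theta$ respectively. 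The density step from $C_c^\infty$ to $W^{1,m}\cap L^r$ is standard. One tiny quibble: your closing remark that $m=1$ ``forces'' the exclusion of $q=\infty$ is not quite the right explanation, since your own bound $\|f\|_{L^\infty}\le\|f'\|_{L^1}$ shows the endpoint actually holds in one dimension (with $\theta=1$); the lemma simply does not claim it, so nothing is lost. This is harmless and does not affect the validity of the proof.
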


The following Caffarelli-Kohn-Nirenberg weighted inequality is the key to deal with the Cauchy problem in this paper.
\begin{lemma}[\bf Caffarelli-Kohn-Nirenberg weighted inequality \cite{CKN1984}]\label{lemmab2}
(1) $\forall h\in C_{0}^{\infty}(R)$, it holds that
\begin{equation}\label{b1}
\|{|x|^{\kappa}h}\|_{r}\leq C\|{|x|^{\alpha}|\partial_{x} h|}\|_{p}^{\theta}\|{|x|^{\beta}h}\|_{q}^{1-\theta}
\end{equation}
where $1\leq p,q<\infty,0<r<\infty,0\leq\theta\leq1,\frac{1}{p}+\alpha>0,
\frac{1}{q}+\beta>0,\frac{1}{r}+\kappa>0$ and satisfy
\begin{equation}\label{b2}
\frac{1}{r}+\kappa=\theta\left(\frac{1}{p}+\alpha-1\right)+(1-\theta)
\left(\frac{1}{q}+\beta\right),
\end{equation}
and\\
$$\kappa=\theta\sigma+(1-\theta)\beta$$\\
with $0\leq\alpha-\sigma \ if\ \theta>0\ \ and \ 0\leq\alpha-\sigma\leq1\ if\ \theta>0\ and\ \frac{1}{p}+\alpha-1=\frac{1}{r}+\kappa$.\\
\end{lemma}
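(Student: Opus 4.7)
The plan is to follow the classical CKN strategy: reduce \eqref{b1} to a pure weighted Hardy--Sobolev inequality via H\"older interpolation using the factorization $\kappa=\theta\sigma+(1-\theta)\beta$, and then establish that Hardy--Sobolev inequality on $\mathbb R$ by representing $h$ as an integral of $h'$ and invoking a one-dimensional weighted Hardy inequality on each half-line.

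First I would dispose of the degenerate case $\theta=0$: the hypotheses force $\kappa=\beta$ and, via \eqref{b2}, $r=q$, so \eqref{b1} reduces to an identity. For $0<\theta<1$ the identity $\kappa=\theta\sigma+(1-\theta)\beta$ yields the pointwise factorization
\[
|x|^{\kappa}|h(x)|=\bigl(|x|^{\sigma}|h(x)|\bigr)^{\theta}\bigl(|x|^{\beta}|h(x)|\bigr)^{1-\theta}.
\]
Choosing $r_1$ so that $\tfrac{1}{r}=\tfrac{\theta}{r_1}+\tfrac{1-\theta}{q}$, H\"older's inequality gives
\[
\bigl\||x|^{\kappa}h\bigr\|_{r}\leq\bigl\||x|^{\sigma}h\bigr\|_{r_1}^{\theta}\bigl\||x|^{\beta}h\bigr\|_{q}^{1-\theta},
\]
while the scaling identity \eqref{b2} pins down $\tfrac{1}{r_1}+\sigma=\tfrac{1}{p}+\alpha-1$. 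Thus the task reduces to the Hardy--Sobolev inequality
\[
\bigl\||x|^{\sigma}h\bigr\|_{r_1}\leq C\,\bigl\||x|^{\alpha}h'\bigr\|_{p},\qquad 0\leq\alpha-\sigma\leq 1,
\]
which is precisely the $\theta=1$ case of \eqref{b1}.

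Next I would prove this reduced inequality. For $h\in C_0^\infty(\mathbb R)$, I use the one-sided fundamental theorem of calculus,
\[
h(x)=-\int_x^{\infty}h'(y)\,dy\ \ (x>0),\qquad h(x)=\int_{-\infty}^{x}h'(y)\,dy\ \ (x<0),
\]
and apply the classical weighted Hardy inequality
\[
\left(\int_0^\infty x^{r_1\sigma}\left|\int_x^\infty f(y)\,dy\right|^{r_1}\!dx\right)^{\!1/r_1}\!\leq C\left(\int_0^\infty y^{p\alpha}|f(y)|^{p}dy\right)^{\!1/p},
\]
which holds exactly under $\tfrac{1}{r_1}+\sigma=\tfrac{1}{p}+\alpha-1$ and $\alpha-\sigma\leq 1$; the analogous estimate on $(-\infty,0)$ follows from the reflection $x\mapsto -x$. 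Summing the two half-line bounds with $f=h'$ yields the desired Hardy--Sobolev inequality, and then the H\"older step above produces \eqref{b1}.

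The main obstacle will be the boundary case $\tfrac{1}{p}+\alpha-1=\tfrac{1}{r}+\kappa$ singled out in the hypothesis, where the margin in the weighted Hardy inequality collapses and the extra constraint $\alpha-\sigma\leq 1$ becomes genuinely necessary. I would handle it by a dyadic decomposition $\int_x^\infty=\sum_{k\geq 0}\int_{2^{k}x}^{2^{k+1}x}$ combined with H\"older's inequality on each dyadic annulus; the resulting geometric series sums to a finite constant depending only on the exponents, giving the sharp form of Hardy's inequality needed. Combining this with the H\"older interpolation completes the proof of \eqref{b1} with $C$ depending only on $p,q,r,\alpha,\beta,\sigma,\kappa,\theta$ and independent of $h$.
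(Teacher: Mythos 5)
First, note that the paper itself does not prove this lemma at all: its ``proof'' is the single sentence deferring to \cite{CKN1984}, so your attempt can only be measured against the original Caffarelli--Kohn--Nirenberg argument. Your strategy --- H\"older interpolation through the factorization $|x|^{\kappa}|h|=(|x|^{\sigma}|h|)^{\theta}(|x|^{\beta}|h|)^{1-\theta}$ followed by the scale-invariant weighted Hardy inequality on each half-line --- is the standard route for the case $\theta=1$ and, more generally, for the boundary case $\tfrac1p+\alpha-1=\tfrac1r+\kappa$. There the hypotheses give you exactly what the Muckenhoupt-type condition for the Hardy operator $f\mapsto\int_x^{\infty}f$ with power weights demands: the balance $\tfrac{1}{r_1}+\sigma=\tfrac1p+\alpha-1>0$ and $\alpha-\sigma\le1$ (which is equivalent to $r_1\ge p$). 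That part of your plan is sound, modulo checking that $r_1>0$, i.e.\ $\tfrac1r>\tfrac{1-\theta}{q}$, before invoking the three-exponent H\"older inequality; this can fail for admissible parameters with $\sigma>0$ (e.g.\ $p=2$, $\alpha=\sigma=1$, $\theta=\tfrac12$, $q=1$, $\beta=-\tfrac12$, $r=4$), so even the boundary case is not fully covered as written.

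The genuine gap is the non-boundary case $0<\theta<1$, $\tfrac1p+\alpha-1\neq\tfrac1r+\kappa$, where the lemma assumes only $\alpha-\sigma\ge0$. Your reduction always produces the reduced inequality $\||x|^{\sigma}h\|_{r_1}\le C\||x|^{\alpha}h'\|_{p}$ with $\tfrac{1}{r_1}+\sigma=\tfrac1p+\alpha-1$, and in this regime that inequality is simply false: if $\alpha-\sigma>1$ then $r_1<p$ and the power-weight Hardy inequality fails, and if $\tfrac1p+\alpha-1\le0$ then $r_1\sigma\le-1$ and $\||x|^{\sigma}h\|_{r_1}=\infty$ for any $h$ with $h(0)\neq0$ (take $p=q=2$, $\alpha=\beta=0$, $\theta=\tfrac14$, $\sigma=-8$, $\kappa=-2$, $r=\tfrac49$ for a concrete admissible instance), so the H\"older step returns $\infty\cdot(\text{finite})$ and proves nothing --- yet the lemma's conclusion is still asserted, and is true. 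This is precisely why the statement imposes $\alpha-\sigma\le1$ \emph{only} in the boundary case. Caffarelli--Kohn--Nirenberg handle the non-critical range by a different mechanism: decompose $\mathbb{R}$ into dyadic annuli $A_j=\{2^{j}\le|x|\le2^{j+1}\}$, apply the unweighted Gagliardo--Nirenberg inequality on each $A_j$ (where all the weights are comparable to constants), and sum; the strict inequality between $\tfrac1p+\alpha-1$ and $\tfrac1r+\kappa$ is exactly what makes the resulting series geometric. Your dyadic decomposition lives only inside the half-line Hardy inequality, so it cannot substitute for this step; your remark that the ``main obstacle'' is the boundary case has it backwards --- that is the case your method handles, and the non-boundary case is the one it misses.
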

\begin{proof}The proof of (1) can be found in \cite{CKN1984}. Here, we omit the details.
\end{proof}
By direct calculation, the potential energy $\Phi(\rho,\bar{\rho})$ has the following properties:
\begin{lemma}\label{Lem2.4}
	Observing the function of the potential energy $\Phi(\rho,\bar{\rho})$, we will easily find the following properties for  positive constants  $c_{1}, c_{2}, C_1, C_2$:

	(1) if $0\leq \rho\leq 2\bar{\rho}$,
	$c_{1}(\rho-\bar{\rho})^{2}\leq \Phi(\rho,\bar{\rho})\leq c_{2}(\rho-\bar{\rho})^{2};$
	
    (2) if $\rho >2\bar{\rho},$
	$\rho^{\gamma}-\bar{\rho}^{\gamma}\leq C_1(\rho-\bar{\rho})^{\gamma}\leq C_2\Phi(\rho,\bar{\rho}).$
\end{lemma}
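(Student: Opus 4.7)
The plan is to reduce both statements to elementary compactness arguments applied to the explicit expression $\Phi(\rho)=\frac{1}{\gamma-1}h(\rho)$ with $h(\rho):=\rho^\gamma-\bar\rho^\gamma-\gamma\bar\rho^{\gamma-1}(\rho-\bar\rho)$. I would first record the three basic facts $h(\bar\rho)=h'(\bar\rho)=0$ and $h''(\rho)=\gamma(\gamma-1)\rho^{\gamma-2}>0$ on $(0,\infty)$, from which $h$ is strictly convex with its unique minimum equal to zero at $\rho=\bar\rho$; in particular $\Phi(\rho)>0$ whenever $\rho\ne\bar\rho$.

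For part (1), I would introduce
$$\psi(\rho):=\frac{\Phi(\rho)}{(\rho-\bar\rho)^2}\text{ for }\rho\ne\bar\rho,\qquad \psi(\bar\rho):=\frac{\gamma}{2}\bar\rho^{\gamma-2},$$
the value at $\bar\rho$ being the Taylor/L'H\^{o}pital limit obtained from a second-order expansion of $h$ about $\bar\rho$. Then $\psi$ is continuous on the compact interval $[0,2\bar\rho]$ and strictly positive everywhere (by the convexity observation above), so it attains a strictly positive minimum $c_1$ and maximum $c_2$, yielding the two-sided bound.

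For part (2), the first inequality is immediate: if $\rho>2\bar\rho$, then $\rho-\bar\rho>\bar\rho$, hence $\rho<2(\rho-\bar\rho)$, and so $\rho^\gamma-\bar\rho^\gamma\le\rho^\gamma\le 2^\gamma(\rho-\bar\rho)^\gamma$, giving $C_1=2^\gamma$. For the second, I would examine $\chi(\rho):=\Phi(\rho)/(\rho-\bar\rho)^\gamma$ on $[2\bar\rho,\infty)$. A direct computation gives $\Phi(2\bar\rho)=\frac{2^\gamma-1-\gamma}{\gamma-1}\bar\rho^\gamma>0$, since $2^\gamma>1+\gamma$ for $\gamma>1$ (the map $\gamma\mapsto 2^\gamma-1-\gamma$ vanishes at $\gamma=1$ with positive derivative $2\ln 2-1$ there). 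Furthermore $\chi(\rho)\to 1/(\gamma-1)$ as $\rho\to\infty$, because the subtracted linear term $\gamma\bar\rho^{\gamma-1}(\rho-\bar\rho)$ is negligible compared with $\rho^\gamma$. A continuous strictly positive function on $[2\bar\rho,\infty)$ with positive limit at infinity has a strictly positive infimum, which delivers the constant $C_2$.

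No genuine obstacle stands in the way: this is a routine calculus lemma. The only two bookkeeping points are handling the removable singularity at $\rho=\bar\rho$ in part (1), and verifying $2^\gamma>1+\gamma$ so that $\chi$ is bounded away from zero at the left endpoint $\rho=2\bar\rho$ in part (2); both are immediate.
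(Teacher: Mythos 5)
Your proof is correct. The paper offers no actual proof of this lemma (it is simply asserted to follow ``by direct calculation''), and your argument --- strict convexity of $h(\rho)=\rho^{\gamma}-\bar{\rho}^{\gamma}-\gamma\bar{\rho}^{\gamma-1}(\rho-\bar{\rho})$ with $h(\bar{\rho})=h'(\bar{\rho})=0$, the removable singularity of $\Phi(\rho)/(\rho-\bar{\rho})^{2}$ at $\rho=\bar{\rho}$ combined with compactness of $[0,2\bar{\rho}]$ for part (1), and the elementary bound $\rho<2(\rho-\bar{\rho})$ together with the limit $\Phi(\rho)/(\rho-\bar{\rho})^{\gamma}\to 1/(\gamma-1)$ for part (2) --- is exactly the standard direct calculation the authors have in mind, so it fills the omitted details correctly.
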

%%%%%%%%%%%%%%%%%%%%%%%%%%%%%%%%%%%%%%%%%%%%%%%%%%%%%%%%%%%%%%%%%%%%%%%%%%%
\section{Global $\nu$-independent estimates for \eqref{a2} and \eqref{a2-1}}\label{s3}
The main purpose of this section is to derive the global $\nu$-independent a priori estimates of the solutions $(\rho, u, b)$ to
the system \eqref{a2} and \eqref{a2-1}, which is used to justify the non-resistive limit. Hence, in this section,
we assume $(\rho, u, b)$ is a smooth solution of the system \eqref{a2} and \eqref{a2-1} on $\mathbb{R}\times [0,T]$ with $0<T<\infty$.
For the sake of simplicity, we denote by $C$ the generic positive constant, which may depend on $\gamma,\mu,T$, but is independent of $\nu$.

First of all, we can prove the following elementary energy estimates.
\begin{lemma}\label{lemmac1}
	Let $(\rho,u,b)$  be a smooth solution of \eqref{a2} and \eqref{a2-1}.
	Then for any $T>0$, it holds that
	\begin{equation}\label{lemmac2-1}
	\sup_{0\leq t\leq T}\left\|\frac{1}{2}\rho
	u^{2}+\Phi(\rho)+\frac{(b-\bar{b})^2}{2}\right\|_{L^1(\mathbb{R})}+
	\int_{0}^{T}\left(\mu
	\|u_{x}\|_{L^2}^{2}+\nu\| b_x\|_{L^2}^2\right)dt\leq
	C.
	\end{equation}
	
\end{lemma}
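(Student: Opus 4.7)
The plan is to derive a basic energy identity whose time integral gives the desired bound, using the three fields tested against the natural multipliers $u$, $\Phi'(\rho)$ (implicitly through the continuity equation), and $b-\bar b$. Because the data only decay to non-zero constants at infinity, each integrand must be centered by subtracting the corresponding equilibrium value so that boundary terms at $|x|=\infty$ vanish and all integrals converge thanks to \eqref{a3}.

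First I would rewrite the continuity equation in the form
\begin{equation*}
\bigl(\Phi(\rho)\bigr)_t+\bigl(\Phi(\rho)u\bigr)_x+\bigl(P(\rho)-P(\bar\rho)\bigr)u_x=0,
\end{equation*}
which follows from $\rho_t+u\rho_x+\rho u_x=0$ together with the algebraic identity $\rho\Phi'(\rho)-\Phi(\rho)=P(\rho)-P(\bar\rho)$ (a direct computation from the explicit form of $\Phi$). Integrating in $x$ gives a clean evolution equation for the potential energy with source $-\int(P(\rho)-P(\bar\rho))u_x\,dx$.

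Next I would multiply $\eqref{a2}_2$ by $u$ and integrate. The convective terms produce $\tfrac12\tfrac{d}{dt}\int\rho u^2\,dx$ (using the continuity equation), the pressure term contributes $-\int(P(\rho)-P(\bar\rho))u_x\,dx$ after integrating by parts and dropping the constant $P(\bar\rho)$, the magnetic pressure term contributes $-\int\tfrac{b^2-\bar b^2}{2}u_x\,dx$ in the same way, and the viscosity term yields the dissipation $-\mu\int u_x^2\,dx$. Independently, I would multiply $\eqref{a2}_3$ by $(b-\bar b)$ and integrate; a short computation (using $u b_x=u(b-\bar b)_x$ and an integration by parts on $\tfrac12\int u((b-\bar b)^2)_x\,dx$) shows that the convective contribution collapses exactly to $+\int\tfrac{b^2-\bar b^2}{2}u_x\,dx$, while the resistive term gives $-\nu\int b_x^2\,dx$.

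Summing the three identities (the $\Phi$ identity, the $u$-multiplied momentum identity, and the $(b-\bar b)$-multiplied induction identity) causes the $\int(P(\rho)-P(\bar\rho))u_x\,dx$ and $\int\tfrac{b^2-\bar b^2}{2}u_x\,dx$ cross terms to cancel, leaving
\begin{equation*}
\frac{d}{dt}\int\!\left[\tfrac12\rho u^2+\Phi(\rho)+\tfrac{(b-\bar b)^2}{2}\right]dx+\mu\!\int u_x^2\,dx+\nu\!\int b_x^2\,dx=0.
\end{equation*}
Integrating in time and using \eqref{a3} together with Lemma \ref{Lem2.4} (to bound the initial potential-energy integral by the $H^1$-norm of $\rho_0-\bar\rho$) yields \eqref{lemmac2-1} with a constant independent of $\nu$. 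The only delicate point, which I expect to be the main obstacle, is justifying that all boundary contributions at $|x|\to\infty$ vanish in the integrations by parts; this is where the subtraction of $P(\bar\rho)$ and $\bar b^2/2$ is essential, and it is handled rigorously by truncating with a cutoff $\chi_R(x)$ supported in $[-R,R]$, carrying out the computation on $[-R,R]$, and letting $R\to\infty$ after invoking the decay assumptions on the solution.
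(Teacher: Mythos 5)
Your proposal is correct and follows essentially the same route as the paper: the identity $\Phi_t+(u\Phi)_x+(P(\rho)-P(\bar\rho))u_x=0$ is exactly the paper's equation \eqref{l1-1}, and summing it with the momentum equation tested by $u$ and the induction equation tested by $b-\bar b$ produces the same cancellation of the cross terms and the same energy identity, after which the initial data are bounded via \eqref{a3} and Lemma \ref{Lem2.4}. Your extra care about the cutoff $\chi_R$ and the vanishing of boundary terms is a legitimate refinement of a point the paper leaves implicit, but it does not change the argument.
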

\begin{proof}
	By the defination of the potential energy $ \Phi$, and using $\eqref{a2}_{1}$, we deduce
	\begin{equation}\label{l1-1}
	\Phi_{t}+(u\Phi)_{x}+(\rho^{\gamma}-\bar{\rho}^{\gamma})u_{x}=0.
	\end{equation}
	Adding the equation $\eqref{a2}_{2}$ multiplied by $u$, $\eqref{a2}_3$ multiplied by $b$, into \eqref{l1-1}, then
	integrating the resulting equation over $\mathbb{R}\times [0,T]$ with respect to the variables $x$ and $t$, we have
	\begin{equation*}\label{c2}
	\begin{aligned}
&\int \left(\frac{1}{2}\rho u^{2}+\Phi(\rho)+\frac{(b-\bar{b})^2}{2}\right)dx+\int_{0}^{T}\left(\mu\|u_{x}\|_{L^2}^{2}+\nu\|b_x\|_{L^2}^2\right)dt\\
&\leq\int\left( \frac{1}{2}\rho_{0}u^{2}+\Phi(\rho_{0})+\frac{(b_{0}-\bar{b})^2}{2}\right)dx\\
&\leq\|\rho_{0}\|_{L^{\infty}}\|u_{0}\|^{2}_{L^{2}}+C(\|\rho_{0}-\bar{\rho}\|_{L^{2}}+\|b_{0}-\bar{b}\|^{2}_{L^{2}})\\
&\leq C(\|\rho_{0}-\bar{\rho}\|_{H^{1}}+\bar{\rho})(\|u_{0}\|^{2}_{L^{2}}+1)+C\|b_{0}-\bar{b}\|^{2}_{L^{2}}\\
&\leq C.
	\end{aligned}
	\end{equation*}
	Then the proof of Lemma \ref{lemmac1} is completed.
\end{proof}

To obtain the upper bound of the density $\rho$, we need the following weighted energy estimates.
\begin{lemma}\label{lemmac2}
		Let $(\rho,u,b)$  be a smooth solution of \eqref{a2} and \eqref{a2-1}.
	Then for any $T>0$ and some index $1<\alpha \leq 2$, it holds that
	\begin{equation}\label{c2}
	\begin{aligned}
\sup_{0\leq t\leq T}&\left\|\left(\frac{1}{2}\rho u^2+\Phi(\rho)+\frac{(b-\bar{b})^2}{2}\right)|x|^\alpha\right\|_{L^1(\mathbb{R})} +\int_0^T\left(\mu\|u_x|x|^{\frac{\alpha}{2}}\|_{L^2}^2+\nu\|b_x|x|^{\frac{\alpha}{2}}\|_{L^2}^2\right)dt\leq C.
	\end{aligned}
	\end{equation}
	\end{lemma}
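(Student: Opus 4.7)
The plan is to derive a weighted energy identity by using a smooth multiplier $\phi(x)\sim|x|^{\alpha}$ at infinity (e.g.\ $\phi(x)=(1+x^{2})^{\alpha/2}$, for which $|\phi'|\lesssim \phi^{(\alpha-1)/\alpha}$ and $\phi''$ has a mild integrable singularity). First I would multiply the momentum equation $\eqref{a2}_{2}$ by $u\phi$, the rewritten magnetic equation $(b-\bar b)_{t}+(u(b-\bar b))_{x}+\bar b\,u_{x}=\nu(b-\bar b)_{xx}$ by $(b-\bar b)\phi$, and the potential-energy identity $\Phi_{t}+(u\Phi)_{x}+(\rho^{\gamma}-\bar\rho^{\gamma})u_{x}=0$ (established in the proof of Lemma~\ref{lemmac1}) by $\phi$, then integrate over $\mathbb{R}$ and add. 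The pressure contributions $\int(P(\rho)-P(\bar\rho))u_{x}\phi$ cancel between the momentum and $\Phi$ identities, and the crucial structural cancellation is that the cross terms $\bar b\int u_{x}(b-\bar b)\phi$ and $\tfrac12\int u_{x}(b-\bar b)^{2}\phi$ produced by expanding $b^{2}=(b-\bar b)^{2}+2\bar b(b-\bar b)+\bar b^{2}$ in the momentum equation cancel exactly with the analogous terms from the magnetic identity.

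After these cancellations the resulting identity takes the form
\begin{equation*}
\frac{d}{dt}\!\int\!\Bigl(\tfrac12\rho u^{2}+\Phi(\rho)+\tfrac12(b-\bar b)^{2}\Bigr)\phi\,dx+\mu\!\int u_{x}^{2}\phi\,dx+\nu\!\int b_{x}^{2}\phi\,dx=R(t),
\end{equation*}
where $R(t)$ gathers all the error terms produced by $\phi'\sim|x|^{\alpha-1}\mathrm{sgn}(x)$ and $\phi''\sim|x|^{\alpha-2}$: schematically contributions of the types $\int\rho|u|^{3}|\phi'|$, $\int(\Phi(\rho)+|P(\rho)-P(\bar\rho)|+(b-\bar b)^{2})|u||\phi'|$, $\mu\!\int|u||u_{x}||\phi'|$ and $\nu\!\int(b-\bar b)^{2}|\phi''|$. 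To close the estimate I would aim to bound $R(t)$ by $\varepsilon\bigl(\mu\!\int u_{x}^{2}\phi+\nu\!\int b_{x}^{2}\phi\bigr)+C\bigl(1+E_{\alpha}(t)\bigr)$, where $E_{\alpha}(t)$ denotes the weighted energy on the left.

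The polynomial terms I plan to treat by the Hölder interpolation
\begin{equation*}
\int f|x|^{\alpha-1}dx\leq\Bigl(\int f\,dx\Bigr)^{1/\alpha}\Bigl(\int f|x|^{\alpha}dx\Bigr)^{(\alpha-1)/\alpha}
\end{equation*}
applied with $f=\rho u^{2},\,\Phi(\rho),\,(b-\bar b)^{2}$; the unweighted factor is controlled by Lemma~\ref{lemmac1}, the weighted factor by $E_{\alpha}$. The $\|u\|_{L^{\infty}}$ that multiplies these terms is handled by Gagliardo-Nirenberg, $\|u\|_{L^{\infty}}\leq C\|u\|_{L^{2}}^{1/2}\|u_{x}\|_{L^{2}}^{1/2}$, and a control of $\|u\|_{L^{2}}$ is obtained by writing $u^{2}=\bar\rho^{-1}\rho u^{2}+\bar\rho^{-1}(\bar\rho-\rho)u^{2}$ and invoking the Caffarelli-Kohn-Nirenberg weighted inequality (Lemma~\ref{lemmab2}) together with the uniform bound $\int\Phi(\rho)\,dx\leq C$ of Lemma~\ref{Lem2.4}. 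The viscous residue $\mu\!\int|u||u_{x}||\phi'|$ is split by Cauchy-Schwarz, absorbing $\mu\!\int u_{x}^{2}\phi$ on the left and leaving a term of the type $\int u^{2}|x|^{\alpha-2}$ which, since $1<\alpha\leq 2$ makes $|x|^{\alpha-2}$ integrable near $0$ and $\leq 1$ at infinity, is again tamed by Lemma~\ref{lemmab2}. The resistive residue $\nu\!\int(b-\bar b)^{2}|\phi''|$ is controlled similarly, and remains $\nu$-bounded uniformly in $\nu$.

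The main obstacle I anticipate is precisely the control of $\|u\|_{L^{\infty}}$ and of the $|x|^{\alpha-2}$-weighted integrals of $u$ in the absence of any direct $L^{2}$ bound on $u$: the elementary energy only furnishes $\sqrt{\rho}u\in L^{2}$, and since the density is allowed to vanish inside the region this does not translate to $u\in L^{2}$. It is exactly at this point that the Caffarelli-Kohn-Nirenberg inequality of Lemma~\ref{lemmab2} becomes indispensable, as it allows me to trade the $|x|^{\alpha-2}$-weighted norms of $u$ against the dissipation weight $|x|^{\alpha}$ on $u_{x}$ plus the unweighted $\sqrt{\rho}u$ bound. Once $R(t)$ is estimated in this way, one obtains a differential inequality of the form $\tfrac{d}{dt}E_{\alpha}+\tfrac{\mu}{2}\!\int u_{x}^{2}\phi+\tfrac{\nu}{2}\!\int b_{x}^{2}\phi\leq C(1+E_{\alpha})$, and Grönwall's lemma together with the initial assumption $(\tfrac12\rho_{0}u_{0}^{2}+\Phi(\rho_{0})+\tfrac12(b_{0}-\bar b)^{2})|x|^{\alpha}\in L^{1}$ delivers~\eqref{c2}.
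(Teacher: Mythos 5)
Your proposal is correct and follows essentially the same route as the paper: you multiply the momentum equation, the potential-energy identity and the magnetic equation by the same $|x|^\alpha$-weighted multipliers, exploit the same cancellation of the $(P(\rho)-P(\bar\rho))u_x$ and $(b-\bar b)$ cross terms, and control the residual $|x|^{\alpha-1}$- and $|x|^{\alpha-2}$-weighted terms exactly as in \eqref{c11}--\eqref{c20}, namely by the $\frac{1}{\alpha}$/$\frac{\alpha-1}{\alpha}$ H\"older interpolation, the decomposition $\bar\rho u^2=(\bar\rho-\rho)u^2+\rho u^2$ to get $\|u\|_{L^2}^2\le C(1+\|u_x\|_{L^2}^2)$, and the Caffarelli--Kohn--Nirenberg inequality for $\||x|^{\frac{\alpha}{2}-1}u\|_{L^2}$, before closing with Gronwall. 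The use of the smooth weight $(1+x^2)^{\alpha/2}$ in place of $|x|^\alpha$ is only a cosmetic regularization and does not change the argument.
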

\begin{proof}
Multiplying the equation $\eqref{a2}_2$ by $u|x|^\alpha$ and integrating the resulting equation over $\mathbb{R}$ with respect to $x$, we have
\begin{equation}\label{c3}
\begin{aligned}
\frac{1}{2}\frac{d}{dt}&\int \rho u^2 |x|^\alpha+\mu \int u_x^2 |x|^\alpha\\
&=\frac{1}{2}\int \rho u^3\alpha |x|^{\alpha-2}x-\mu\int \alpha|x|^{\alpha-2}xuu_x-\int \left(P(\rho)+\frac{b^2}{2}\right)_x u|x|^\alpha.\\
\end{aligned}
\end{equation}
It follows from the integration by parts that
\begin{equation}\label{c4}
\begin{aligned}
-&\int \left(P(\rho)+\frac{b^2}{2}\right)_x u|x|^\alpha\\&=-\int \left(\left(P(\rho)-P(\bar{\rho})\right)_x + bb_x\right)u|x|^\alpha\\
&=-\int \left(\left(P(\rho)-P(\bar{\rho})\right)_x + (b-\bar{b})(b-\bar{b})_x+\bar{b}(b-\bar{b})_x\right)u|x|^\alpha\\
&=\int \left(P(\rho)-P(\bar{\rho}) + \frac{(b-\bar{b})^2}{2}+\bar{b}(b-\bar{b})\right)\left(u_x|x|^\alpha+u\alpha|x|^{\alpha-2}x\right).\\
\end{aligned}
\end{equation}
Then, combining \eqref{c3} and \eqref{c4} gives
\begin{equation}\label{c5}
\begin{aligned}
\frac{1}{2}\frac{d}{dt}&\int \rho u^2 |x|^\alpha+\mu \int u_x^2 |x|^\alpha
=\frac{1}{2}\int \rho u^3\alpha |x|^{\alpha-2}x-\mu\int \alpha|x|^{\alpha-2}xuu_x\\
&+\int \left(P(\rho)-P(\bar{\rho}) + \frac{(b-\bar{b})^2}{2}+\bar{b}(b-\bar{b})\right)\left(u_x|x|^\alpha+u\alpha|x|^{\alpha-2}x\right).
\end{aligned}
\end{equation}
To deal with the last term on the right-hand side of \eqref{c5}, first, multiplying \eqref{l1-1} by $|x|^\alpha$ and integrating over $\mathbb{R}$ with respect to $x$, yields
\begin{equation}\label{c7}
\begin{aligned}
\frac{d}{dt}\int \Phi(\rho )|x|^\alpha -\int u\Phi(\rho)\alpha|x|^{\alpha-2}x+\int (\rho^\gamma-\bar{\rho}^\gamma)|x|^\alpha u_x=0,
\end{aligned}
\end{equation}
 and then multiplying the equation $\eqref{a2}_3$ by $(b-\bar{b})|x|^\alpha$ and integrating over $\mathbb{R}$ with respect to $x$, we have
\begin{equation*}\label{c8}
\begin{aligned}
\frac{1}{2}\frac{d}{dt}\int (b-\bar{b})^2|x|^\alpha+&\frac{1}{2}\int \left((b-\bar{b})^2\right)_xu|x|^\alpha+\int (b-\bar{b})^2 u_x|x|^\alpha+\bar{b}\int (b-\bar{b})u_x|x|^\alpha\\
&=\nu \int b_{xx}(b-\bar{b})|x|^\alpha,
\end{aligned}
\end{equation*}
which together with the integration by parts implies that
\begin{equation}\label{c9}
\begin{aligned}
\frac{1}{2}\frac{d}{dt}\int(b-\bar{b})^2|x|^\alpha&+\nu\int b_x^2|x|^\alpha=-\int \frac{(b-\bar{b})^2}{2}(u_x|x|^\alpha-u\alpha|x|^{\alpha-2}x)\\
&-\bar{b}\int (b-\bar{b})u_x|x|^\alpha-\nu\int b_x(b-\bar{b})\alpha|x|^{\alpha-2}x.
\end{aligned}
\end{equation}
Now, putting \eqref{c7} and \eqref{c9} into \eqref{c5}, we have
\begin{equation}\label{c10}
\begin{aligned}
\frac{d}{dt}&\int \left(\frac{1}{2}\rho u^2+\Phi(\rho)+\frac{(b-\bar{b})^2}{2}\right)|x|^\alpha dx+\mu\int u_x^2|x|^\alpha+\nu\int b_x^2 |x|^\alpha\\
&=\int \left(\frac{1}{2}\rho u^2+\Phi(\rho)+(b-\bar{b})^2\right)u\alpha|x|^{\alpha-2}x+\int (\rho^\gamma-\bar{\rho}^\gamma)u \alpha |x|^{\alpha-2}x\\
&\ \ \ \  -\mu\int \alpha |x|^{\alpha-2}x uu_x+\bar{b}\int (b-\bar{b})u \alpha |x|^{\alpha-2}x-\nu\int (b-\bar{b})b_x  \alpha |x|^{\alpha-2}x\\
&=:I_1+I_2+I_3+I_4+I_5.
\end{aligned}
\end{equation}
Next, we estimate the terms $I_1$-$I_5$ as follows:
\begin{equation}\label{c11}
\begin{aligned}
I_1&\leq \int \left(\left|\frac{1}{2}\rho u^2+\Phi(\rho)+\frac{(b-\bar{b})^2}{2}\right|\right)|u||x|^{\alpha-1}\\
&\leq \int\left(\left|\frac{1}{2}\rho u^2+\Phi(\rho)+\frac{(b-\bar{b})^2}{2}\right||x|^{\alpha}\right)^{\frac{\alpha-1}{\alpha}}\left(\left|\frac{1}{2}\rho u^2+\Phi(\rho)+\frac{(b-\bar{b})^2}{2}\right|\right)^{\frac{1}{\alpha}}|u|\\
&\leq C\|u\|_{L^\infty}\left\|\left(\frac{1}{2}\rho u^2+\Phi(\rho)+\frac{(b-\bar{b})^2}{2}\right)|x|^{\alpha}\right\|_{L^1}^{1-\frac{1}{\alpha}}\left\|\frac{1}{2}\rho u^2+\Phi(\rho)+\frac{(b-\bar{b})^2}{2}\right\|_{L^1}^{\frac{1}{\alpha}}\\
&\leq C(1+\|u_x\|_{L^2})\left\|\left(\frac{1}{2}\rho u^2+\Phi(\rho)+\frac{(b-\bar{b})^2}{2}\right)|x|^{\alpha}\right\|_{L^1}^{1-\frac{1}{\alpha}}\\
&\leq C(1+\|u_x\|_{L^2})\left(\left\|\left(\frac{1}{2}\rho u^2+\Phi(\rho)+\frac{(b-\bar{b})^2}{2}\right)|x|^{\alpha}\right\|_{L^1}+1\right),
\end{aligned}
\end{equation}
where $\alpha>1$ and  we have used H\"{o}lder's inequality, Lemma \ref{lemmac1} and the following facts:
\begin{equation*}\label{c12}
\begin{aligned}
\bar{\rho}\int u^2&=\int (\bar{\rho}-\rho+\rho)u^2\\
&\leq \int \left((\bar{\rho}-\rho)|_{\{0\leq\rho\leq 2\bar{\rho}\}}+(\bar{\rho}-\rho)|_{\{\rho> 2\bar{\rho}\}}\right)u^2+\int \rho u^2\\
&\leq C\left(\|(\rho-\bar{\rho})|_{\{0\leq\rho\leq 2\bar{\rho}\}}\|_{L^2}\|u\|_{L^4}^2+\|(\rho-\bar{\rho})|_{\{\rho> 2\bar{\rho}\}}\|_{L^\gamma}\|u\|_{L^{\frac{2\gamma}{\gamma-1}}}^2+1\right)\\
&\leq C\left[\|\Phi(\rho)\|_{L^1}^{\frac{1}{2}}\left(\|u\|_{L^2}^{\frac{3}{4}}\|u_x\|_{L^2}^{\frac{1}{4}}\right)^{2}+\|\Phi( \rho)\|_{L^1}^{\frac{1}{\gamma}}\left(\|u\|_{L^2}^{1-\frac{1}{2\gamma}}\|u_x\|_{L^2}^{\frac{1}{2\gamma}}\right)^2+1\right]\\
&\leq \varepsilon \|u\|_{L^2}^2+ C\left(1+\|u_x\|_{L^2}^2\right),
\end{aligned}
\end{equation*}
where we have used the properties of the potential energy $\Phi$ in Lemma \ref{Lem2.4}. This together with G-N inequality implies that
\begin{equation}\label{c12-1}
\|u\|_{L^2}^2\leq  C(1+\|u_x\|_{L^2}^2),
\end{equation}
and
\begin{equation}\label{c13}
\|u\|_{L^\infty}\leq C\|u\|_{L^2}^{\frac{1}{2}}\|u_x\|_{L^2}^{\frac{1}{2}}\leq C(1+\|u_x\|_{L^2}).
\end{equation}
Applying the property of $\Phi(\rho)$, H\"{o}lder's inequality, the C-K-N weighted inequality \eqref{b2}, Lemma \ref{lemmac1}, Young's inequality and \eqref{c13}, we obtain
\begin{equation}\label{c14}
\begin{aligned}
I_2&=\int \left((\rho^\gamma-\bar{\rho}^\gamma)|_{\{0\leq \rho\leq 2\bar{\rho}\}}+(\rho^\gamma-\bar{\rho}^\gamma)|_{\{\rho> 2\bar{\rho}\}}\right)u\alpha |x|^{\alpha-2}x\\
&\leq C\int \left(|\rho-\bar{\rho}||_{\{0\leq \rho\leq 2\bar{\rho}\}}+(\rho-\bar{\rho})^\gamma|_{\{\rho> 2\bar{\rho}\}}\right)|u||x|^{\alpha-1}\\
&\leq C\int \left(\Phi^{\frac{1}{2}}(\rho)|_{\{0\leq \rho\leq 2\bar{\rho}\}}+\Phi(\rho)|_{\{\rho> 2\bar{\rho}\}}\right)|u||x|^{\alpha-1}\\
&\leq C\int \left(\Phi^{\frac{1}{2}}(\rho)|_{\{0\leq \rho\leq 2\bar{\rho}\}}|x|^{\frac{\alpha}{2}}\right)|u||x|^{\frac{\alpha}{2}-1}+C\int \left(\Phi(\rho)|_{\{\rho> 2\bar{\rho}\}}|x|^{\alpha}\right)^{\frac{\alpha-1}{\alpha}}\Phi^{\frac{1}{\alpha}}(\rho)|u|\\
&\leq C\left(\|\Phi(\rho)|x|^{\alpha}\|_{L^1}^{\frac{1}{2}}\left\||x|^{\frac{\alpha}{2}-1}u\right\|_{L^2}+\|\Phi(\rho)|x|^\alpha\|_{L^1}^{1-\frac{1}{\alpha}}\|\Phi(\rho)\|_{L^1}^{\frac{1}{\alpha}}\|u\|_{L^\infty}\right)\\
&\leq +C(1+\|u_x\|_{L^2}^2)(1+\|\Phi(\rho)|x|^\alpha\|_{L^1}),
\end{aligned}
\end{equation}
where we have used the fact:
\begin{equation}\label{cc}
	\begin{aligned}
		\||x|^{\frac{\alpha}{2}-1}u\|_{L^2}&\leq C\|u\|_{L^2}^{\frac{\alpha}{2}}\|u_x\|_{L^2}^{1-\frac{\alpha}{2}}\\
		&\leq C\left(\|u\|_{L^2}+\|u_x\|_{L^2}\right)\\
		&\leq C(1+\|u_x\|_{L^2}),
	\end{aligned}
\end{equation}
here, the index $1<\alpha\leq 2$.

Similarly, using the C-K-N weighted inequality \eqref{b2}, we have
\begin{equation}\label{c15}
\begin{aligned}
I_3&\leq C\||x|^{\frac{\alpha}{2}}u_x\|_{L^2}\||x|^{\frac{\alpha}{2}-1}u\|_{L^2}\\
&\leq \varepsilon\||x|^{\frac{\alpha}{2}}u_x\|_{L^2}^2 +C\left(1+\|u_x\|_{L^2}^2\right).
\end{aligned}
\end{equation}
For the term $I_4$, it follows from the H\"{o}lder inequality and C-K-N weighted inequality that
\begin{equation}\label{c19}
\begin{aligned}
I_4&\leq C\left\|(b-\bar{b})|x|^{\frac{\alpha}{2}}\right\|_{L^2}\left\||x|^{\frac{\alpha}{2}-1}u\right\|_{L^2}\\
	&\leq C\left(\left\|(b-\bar{b})|x|^{\frac{\alpha}{2}}\right\|_{L^2}^2+\|u_x\|_{L^2}^2+1\right).
\end{aligned}
\end{equation}
Similarly, using the C-K-N weighted inequality \eqref{b2}, one has
\begin{equation}\label{c20}
\begin{aligned}
I_5&\leq C\nu \||x|^{\frac{\alpha}{2}}b_x\|_{L^2}\|(b-\bar{b})|x|^{\frac{\alpha}{2}-1}\|_{L^2}\\
&\leq C\nu \||x|^{\frac{\alpha}{2}}b_x\|_{L^2}\left(\|b-\bar{b}\|_{L^2}^{\frac{\alpha}{2}}\|b_x\|_{L^2}^{1-\frac{\alpha}{2}}\right)\\
&\leq \varepsilon\nu  \||x|^{\frac{\alpha}{2}}b_x\|_{L^2}^2+C\nu\left(1+\|b_x\|_{L^2}^2\right),
\end{aligned}
\end{equation}
where $1<\alpha\leq 2$.

Then, substituting \eqref{c11}, \eqref{c14}, \eqref{c15}, \eqref{c19} and \eqref{c20} into \eqref{c10}, choosing $\varepsilon>0$ small enough, one deduce
\begin{equation*}
\begin{aligned}
\frac{d}{dt}&\int \left(\frac{1}{2}\rho u^2+\Phi(\rho)+\frac{(b-\bar{b})^2}{2}\right)|x|^\alpha+\mu\int |x|^\alpha u_x^2+\nu\int |x|^\alpha b_x^2\\
&\leq C(1+\|u_x\|_{L^2}^2)\left\|\left(\frac{1}{2}\rho u^2+\Phi(\rho)+\frac{(b-\bar{b})^2}{2}\right)|x|^\alpha\right\|_{L^1}+C(1+\|u_x\|_{L^2}^2+\nu\|b_x\|_{L^2}^2).
\end{aligned}
\end{equation*}
This together with Gronwall's inequality, \eqref{a3} and Lemma \ref{lemmac1} gives
$$\int \left(\frac{1}{2}\rho u^2+\Phi(\rho)+\frac{(b-\bar{b})^2}{2}\right)|x|^\alpha+\mu\int_0^T\int |x|^\alpha u_x^2+\nu\int_0^T\int |x|^\alpha b_x^2\leq C(T).$$
Then, the proof of Lemma \ref{lemmac2} is completed.
\end{proof}

The upper bound of the density $\rho$ can be shown in the similar manner as that in \cite{Ye2015}. However, for completeness of the paper, we give the details here.
\begin{lemma}\label{lemmac3}
		Let $(\rho,u,b)$  be a smooth solution of \eqref{a2} and \eqref{a2-1}.
	Then for any $T>0$, it holds that
	\begin{equation*}
	0\leq \rho(x,t)\leq C, \ \forall (x,t)\in \mathbb{R}\times[0,T],
	\end{equation*}
	and
	$$\sup_{0\leq t\leq T}\|\rho-\bar{\rho}\|_{L^2(\mathbb{R})}\leq C.$$
	\end{lemma}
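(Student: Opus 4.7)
The proof is via the Zlotnik strategy along characteristics, following \cite{Ye2015} with adaptations for the magnetic contribution. Let $X(t;x)$ be the Lagrangian flow defined by $\partial_t X = u(X,t)$, $X(0;x)=x$, and set $D_t = \partial_t + u\partial_x$. Using $F_x = \rho\dot u$ together with the momentum equation (and the decay of $(\rho-\bar\rho,u_x,b-\bar b)$ at spatial infinity), one computes that a suitable antiderivative of momentum $\eta(x,t)$, with $\eta_x=\rho u$, satisfies $\eta_t = -\rho u^2 + F$, and hence
\begin{equation*}
D_t\bigl(\mu\ln\rho + \eta\bigr) \;=\; -\bigl(P(\rho)-P(\bar\rho)\bigr) - \tfrac{1}{2}\bigl(b^2-\bar b^2\bigr).
\end{equation*}

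The crux is a $\nu$-independent $L^\infty$-bound on $\eta$. Since $\bar\rho\neq 0$, the naive integral $\int_{-\infty}^x \rho u\,dy$ need not converge; as flagged in the introduction, we decompose
\[
\eta \;=\; \int_{-\infty}^x(\sqrt\rho-\sqrt{\bar\rho})\sqrt\rho u\,dy \;+\; \sqrt{\bar\rho}\int_{-\infty}^x \sqrt\rho u\,dy \;=:\; \eta_1 + \eta_2,
\]
and bound each piece separately. The integrand of $\eta_1$ is genuinely $L^1$, because $\sqrt\rho-\sqrt{\bar\rho}\in L^2$ (Lemma \ref{Lem2.4} combined with Lemma \ref{lemmac1}) and $\sqrt\rho u\in L^2$ (Lemma \ref{lemmac1}); Cauchy--Schwarz then gives $\|\eta_1\|_{L^\infty}\leq C$. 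The piece $\eta_2$ is more delicate, because $\sqrt\rho u$ is not in $L^1$: here I invoke the Caffarelli--Kohn--Nirenberg weighted inequality (Lemma \ref{lemmab2}), exploiting the weighted moment bound $\int u^2(1+|x|^\alpha)<\infty$ supplied by Lemma \ref{lemmac2}, to obtain $L^{\tilde p}$ control on $\eta_2$ for some finite $\tilde p$, and then interpolate this against $\|\partial_x\eta_2\|_{L^2}\leq C$ via Gagliardo--Nirenberg to conclude $\|\eta_2\|_{L^\infty}\leq C$.

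With $\|\eta\|_{L^\infty}\leq C$ in hand, set $y(t) := \mu\ln\rho(X(t;x),t) + \eta(X(t;x),t)$; then $y'(t) = -(P(\rho)-P(\bar\rho)) - \tfrac{1}{2}(b^2-\bar b^2)$. The damping $-(P(\rho)-P(\bar\rho))$ exceeds a fixed positive constant once $\rho\geq 2\bar\rho$ by Lemma \ref{Lem2.4}(2), while the magnetic forcing $-\tfrac{1}{2}(b^2-\bar b^2)$ is absorbed as a bounded source using the $\nu$-independent pointwise bounds on $b$ furnished by the forthcoming Lemmas \ref{lemmac4}--\ref{lemmac5}. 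Zlotnik's inequality then delivers $y(t)\leq C$, hence $\rho\leq C$ uniformly in $\nu$. The $L^2$-bound $\|\rho-\bar\rho\|_{L^2}\leq C$ follows immediately by splitting $\mathbb R = \{\rho\leq 2\bar\rho\}\cup\{\rho>2\bar\rho\}$: on the first set Lemma \ref{Lem2.4}(1) gives $(\rho-\bar\rho)^2\leq C\Phi(\rho)$, and on the second set the just-obtained upper bound reduces $(\rho-\bar\rho)^2$ to $C(\rho-\bar\rho)^\gamma\leq C\Phi(\rho)$ via Lemma \ref{Lem2.4}(2); integrating and invoking Lemma \ref{lemmac1} closes the estimate.

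The principal obstacle is the magnetic forcing in the Zlotnik ODE. Unlike the Navier--Stokes case treated in \cite{Ye2015}, the elementary energy estimates yield only $\int_0^T\|b_x\|_{L^2}^2 \leq C/\nu$, so no $\nu$-uniform $L^\infty_x$ bound on $b$ is available from Lemmas \ref{lemmac1}--\ref{lemmac2} in isolation. The density and magnetic-field estimates must therefore be closed together in a continuity/bootstrap scheme coupling Lemma \ref{lemmac3} with the pointwise bounds on $b$ established via the effective viscous flux structure in Lemmas \ref{lemmac4}--\ref{lemmac5}.
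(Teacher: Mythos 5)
Your Lagrangian setup, the decomposition of the momentum antiderivative into $\eta_1+\eta_2$, the Cauchy--Schwarz bound on $\eta_1$, the CKN/Gagliardo--Nirenberg bound on $\eta_2$ (though note the relevant weighted quantity is $\||x|^{\alpha/2}\sqrt{\rho}u\|_{L^2}$ from Lemma \ref{lemmac2}, not $\int u^2|x|^\alpha$), and the final $L^2$-estimate for $\rho-\bar\rho$ all match the paper's argument. However, there is a genuine gap in how you close the ODE along characteristics, and it stems from missing a sign. The right-hand side of your identity is $-(P(\rho)-P(\bar\rho))-\tfrac12(b^2-\bar b^2)$; since $-P(\rho)\le 0$ and $-\tfrac12 b^2\le 0$, both solution-dependent terms can simply be discarded, leaving the constant upper bound $P(\bar\rho)+\tfrac12\bar b^2$. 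One then integrates $\frac{d}{dt}(\xi+\mu\ln\rho)\le C$ directly and reads off $\mu\ln\rho\le C$ from the $L^\infty$ bounds on $\xi$ and $\xi_0$. No Zlotnik damping argument and, crucially, \emph{no pointwise bound on $b$} is needed at this stage.

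Your proposed alternative --- absorbing $-\tfrac12(b^2-\bar b^2)$ as a bounded source using the $L^\infty$ bound on $b$ from Lemmas \ref{lemmac4}--\ref{lemmac5} --- is not merely unnecessary, it is circular: those lemmas use the conclusions of Lemma \ref{lemmac3} as input (e.g.\ $\|\rho-\bar\rho\|_{L^2}\le C$ and $\rho\le C$ enter the estimates of $\|F\|_{L^3}$ and $\|F\|_{L^\infty}$ in \eqref{c35} and \eqref{c49}). You acknowledge this by invoking an unspecified ``continuity/bootstrap scheme,'' but you do not carry it out, and the ``principal obstacle'' you describe in your last paragraph does not exist once the sign of $-\tfrac12 b^2$ is observed. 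As written, your proof does not close; with the sign observation it reduces to the paper's argument.
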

\begin{proof}
Let $\xi=\int_{-\infty}^x \rho udy$, then the momentum equation $\eqref{a2}_2$ can be rewritten as
\begin{equation*}
\xi_{tx}+\left(\rho u^2+P(\rho)-P(\bar{\rho})+\frac{b^2-\bar{b}^2}{2}\right)_x=(\mu u_x)_x.
\end{equation*}	
Integrating the above equality with respect to $x$ over $(-\infty,x)$ yields that
\begin{equation}\label{c21}
\xi_t+\rho u^2 +P(\rho)-P(\bar{\rho})+\frac{b^2-\bar{b}^2}{2}=\mu u_x,
\end{equation}
which together with $\eqref{a2}_1$ gives
\begin{equation}\label{c22}
\xi_t+\rho u^2+P(\rho)-P(\bar{\rho})+\frac{b^2-\bar{b}^2}{2}+\mu\frac{\rho_t+u\rho_x}{\rho}=0.
\end{equation}
Next, we define the particle trajectory $X(x,t)$ as follows:
\begin{equation}\label{c23}
\begin{cases}
\frac{dX(x,t)}{dt}=u(X(x,t),t),\\
X(x,0)=x,
\end{cases}
\end{equation}
which implies
\begin{equation*}\label{c24}
\begin{aligned}
\frac{d\xi}{dt}=\xi_t+u\xi_x=\xi_t+\rho u^2.
\end{aligned}
\end{equation*}
Then combining the above equality and \eqref{c22}, we infer that
\begin{equation*}
\frac{d}{dt}(\xi+\mu\ln \rho)\left(X(x,t),t\right) +\left(P(\rho)+\frac{b^2}{2}\right)\left(X(x,t),t\right)-\left(P(\bar{\rho})+\frac{\bar{b}}{2}\right)=0,
\end{equation*}
which together with $\left(P(\rho)+\frac{b^2}{2}\right)\left(X(x,t),t\right)\geq 0$ yields
\begin{equation}\label{c25}
\frac{d}{dt}(\xi+\mu\ln \rho)\left(X(x,t),t\right)\leq\left( P(\bar{\rho})+\frac{\bar{b}^2}{2}\right)\left(X(x,t),t\right)\leq C.
\end{equation}
Thus, integrating it over $[0,T]$ with respect to $t$, we have
$$(\xi +\mu\ln \rho)\left(X(x,t),t\right) \leq (\xi+\mu\ln \rho)(x,0)+C.$$
By direct calculation, we obtain
\begin{equation}\label{c26}
\begin{aligned}
\ln \rho& \leq\frac{1}{\mu} (\xi_0+\mu\ln \rho_0+C-\xi)\\
&\leq\frac{1}{\mu} \left(  \xi_0+\mu\ln \rho_0+C-\int_{-\infty}^x \rho u dy\right)\\
&\leq\frac{1}{\mu} \left(  \xi_0+\mu\ln \rho_0+C-\int_{-\infty}^x \sqrt{\rho}u(\sqrt{\rho}-\sqrt{\bar{\rho}}) dy-\sqrt{\bar{\rho}}\int_{-\infty}^x\sqrt{\rho} u dy\right)\\
&\leq\frac{1}{\mu} ( \xi_0+\ln \rho_0+C-\xi_1-\xi_2).
	\end{aligned}
\end{equation}
Firstly, it follows from Lemma \ref{Lem2.4} and Lemma \ref{lemmac1} that $\|\xi_1\|_{L^\infty}$ can be estimated as
\begin{equation}\label{c27}
\begin{aligned}
|\xi_1|&=|\int_{-\infty}^x \sqrt{\rho}u(\sqrt{\rho}-\sqrt{\bar{\rho}}) dy|\\
&\leq \|\sqrt{\rho}u\|_{L^2}\left(\|(\sqrt{\rho}-\sqrt{\bar{\rho}})|_{\{0\leq \rho \leq 2\bar{\rho}\}}\|_{L^2}+\|(\sqrt{\rho}-\sqrt{\bar{\rho}})|_{\{\rho> 2\bar{\rho}\}}\|_{L^2}\right)\\
&\leq C\|\sqrt{\rho}u\|_{L^2}\left(\|(\rho-\bar{\rho})|_{\{0\leq \rho \leq 2\bar{\rho}\}}\|_{L^2}+\|\sqrt{\rho -\bar{\rho}}|_{\{\rho >2\bar{\rho}\}}\|_{L^2}\right)\\
&\leq C\|\sqrt{\rho}u\|_{L^2}\left(\|(\rho-\bar{\rho})|_{\{0\leq \rho \leq 2\bar{\rho}\}}\|_{L^2}+\|(\rho -\bar{\rho})_{\{\rho >2\bar{\rho}\}}\|_{L^1}^{\frac{1}{2}}\right)\\
&\leq C\|\sqrt{\rho}u\|_{L^2}\left(\|(\rho-\bar{\rho})|_{\{0\leq \rho \leq 2\bar{\rho}\}}\|_{L^2}+\|(\rho -\bar{\rho})^\gamma|_{\{\rho >2\bar{\rho}\}}\|_{L^1}^{\frac{1}{2}}\right)\\
&\leq C\|\sqrt{\rho}u\|_{L^2}\left(\|\Phi(\rho)\|_{L^1}^{\frac{1}{2}}+\|\Phi(\rho)\|_{L^1}^{\frac{1}{2}}\right)\leq C.
\end{aligned}
\end{equation}
Secondly, using the Gagliardo-Nirenberg inequality, the Caffarelli-Kohn-Nirenberg weighted inequality, Lemma \ref{lemmac1} and Lemma \ref{lemmac2}, we can bound  $\|\xi_2\|_{L^{\infty}}$ as
\begin{equation}\label{c28}
\begin{aligned}
\|\xi_2\|_{L^{\infty}}&\leq C\|\xi_2\|_{L^{\tilde{p}}}^{\frac{\tilde{p}}{\tilde{p}+2}}\|\xi_{2x}\|_{L^2}^{\frac{2}{\tilde{p}+2}}\\
&\leq C(\|\xi_{2x}\|_{L^{2}}^{\eta}\||x|^\kappa \xi_2\|_{L^{q}}^{1-\eta})^{\frac{\tilde{p}}{\tilde{p}+2}}\|\xi_{2x}\|_{L^2}^{\frac{2}{\tilde{p}+2}}\\
&\leq C(\|\xi_{2x}\|_{L^{2}}^{\eta}\||x|^{\frac{\alpha}{2}}\xi_{2x}\|_{L^2}^{1-\eta})^{\frac{\tilde{p}}{\tilde{p}+2}}\|\xi_{2x}\|
_{L^2}^{\frac{2}{\tilde{p}+2}}\\
&\leq C\|\xi_{2x}\|_{L^{2}}^{1-(1-\eta)\frac{\tilde{p}}{\tilde{p}+2}}\||x|^{\frac{\alpha}{2}}\xi_{2x}\|_{L^2}^{(1-\eta)\frac{\tilde{p}}{\tilde{p}+2}}\\
&= C\|\xi_{2x}\|_{L^{2}}^{1-\frac{1}{\alpha}}\||x|^{\frac{\alpha}{2}}\xi_{2x}\|_{L^2}^{\frac{1}{\alpha}}\\
&\leq C(\|\sqrt{\rho}u\|^{1-\frac{1}{\alpha}}_{L^{2}}\||x|^{\frac{\alpha}{2}}\sqrt{\rho}u\|^{\frac{1}{\alpha}}_{L^2})\leq C.
\end{aligned}
\end{equation}
Here the indexes $1\leq \tilde{p}<\infty,\ q>1, \ \eta\in (0,1)$ and satisfy
\begin{equation*}
\begin{aligned}
 &\frac{1}{\tilde{p}}=(\frac{1}{2}-1)\eta+\left(\frac{1}{q}+\kappa\right)(1-\eta),\\
  &\frac{1}{q}+\kappa=\frac{1}{2}+\frac{\alpha}{2}-1>0,
\end{aligned}
\end{equation*}
which gives
\begin{equation}
\begin{aligned}
\alpha>1,\ \eta=\frac{\tilde{p}(\alpha-1)-2}{\alpha \tilde{p}}>0\Rightarrow \tilde{p}>\frac{2}{\alpha-1}.
\end{aligned}
\end{equation}
Similarly as that for \eqref{c27} and \eqref{c28}, we can obtain
\begin{equation}\label{c26}
|\xi_0|\leq C.
\end{equation}
Then, substituting \eqref{c27}, \eqref{c28} and \eqref{c26} into \eqref{c25}, we have
$$\ln\rho\leq C,$$
which gives
$$\rho \leq C.$$
It follows from Lemma \ref{Lem2.4} and Lemma \ref{lemmac1} that
\begin{equation}\label{c28-1}
\begin{aligned}
\|\rho-\bar{\rho}\|_{L^{2}}&=\left\|(\rho-\bar{\rho})|_{\{\rho\leq2\bar{\rho}\}}^2\right\|_{L^1}^{\frac{1}{2}}+\left\|(\rho-\bar{\rho})|_{\{\rho\geq2\bar{\rho}\}}\right\|_{L^2}\\
&\leq\left\|\Phi(\rho)\right\|_{L^1}^{\frac{1}{2}}+\left\|(\rho-\bar{\rho})|_{\{\rho\geq2\bar{\rho}\}}\right\|_{L^2}\\
&\leq C+\left\|(\rho-\bar{\rho})|_{\{\rho\geq2\bar{\rho}\}}\right\|_{L^2}.
\end{aligned}
\end{equation}
To deal with the last term on the right-hand side of \eqref{c28-1}, we discuss it in the following two cases:\\
\underline{Case 1}: if $1<\gamma\leq 2$, by Lemma \ref{Lem2.4}, Lemma\ref{lemmac1} and the fact that $\rho\le C$, it holds
\begin{equation}\label{c28-3}
\|(\rho -\bar{\rho})|_{\{\rho\geq2\bar{\rho}\}}\|_{L^2(\mathbb{R})}\leq \|\rho -\bar{\rho}\|^{\frac{\gamma}{2}}_{L^\gamma(\mathbb{R})}\|\rho -\bar{\rho}\|^{1-\frac{\gamma}{2}}_{L^\infty(\mathbb{R})}\leq \|\Phi(\rho)\|^{\frac{1}{2}}_{L^1(\mathbb{R})}\|\rho -\bar{\rho}\|^{1-\frac{\gamma}{2}}_{L^\infty(\mathbb{R})}\leq C(T).
\end{equation}
\underline{Case 2}: if $\gamma>2$, using the fact that
$\rho-\bar{\rho} >\bar{\rho}\Rightarrow\frac{1}{(\rho-\bar{\rho})^{\gamma-2}}<\frac{1}{\bar{\rho}^{\gamma-2}}$ and  Lemma \ref{lemmac1}, we have
\begin{equation}\label{c28-4}
\begin{aligned}
&\|(\rho -\bar{\rho})|_{\{\rho\geq2\bar{\rho}\}}\|_{L^2(\mathbb{R})}\\
&\leq\left\|(\rho-\bar{\rho})^2|_{\{\rho\geq2\bar{\rho}\}}\right\|_{L^1}^{\frac{1}{2}}\\
&\leq\left\|(\rho-\bar{\rho})^\gamma|_{\{\rho\geq2\bar{\rho}\}}\frac{1}{(\rho-\bar{\rho})^{\gamma-2}|_{\{\rho\geq2\bar{\rho}\}}}\right\|_{L^1}^{\frac{1}{2}}\\
&\leq \left\|(\rho-\bar{\rho})^\gamma|_{\{\rho\geq2\bar{\rho}\}}\frac{1}{\bar{\rho}^{\gamma-2}}\right\|_{L^1}^{\frac{1}{2}}\\
&\leq \frac{1}{\bar{\rho}^{\gamma-2}}\left\|\Phi(\rho)\right\|_{L^1}^{\frac{1}{2}}\leq C.\\
\end{aligned}
\end{equation}
Thus, combining \eqref{c28-3}, \eqref{c28-4} with  \eqref{c28-1}, we can obtain  $\|\rho-\bar{\rho}\|_{L^{2}}\leq C.$
This completes the proof of Lemma \ref{lemmac3}.
 \end{proof}

\begin{lemma}\label{lemmac4}
Let $(\rho,u,b)$  be a smooth solution of \eqref{a2}-\eqref{a2-1}.
Then for any $T>0$, it holds that
\begin{equation*}
\begin{aligned}
\sup_{0\leq t\leq T}\Big(\mu \|u_x\|_{L^2}^2&+\nu \|b_x\|_{L^2}^2+\|b-\bar{b}\|_{L^4}^4\Big)+\int_0^T\| b-\bar{b}\|_{L^6}^6+\mu\nu\|(b-\bar{b})b_x\|_{L^2}^2 dt\\
&+\int_0^T \nu^2 \|b_{xx}\|_{L^2}^2 dt +\int_0^T\|\sqrt{ \rho }\dot{u}\|_{L^2}^2dt \leq C
\end{aligned}
\end{equation*}	
and
\begin{equation*}
 \sup_{0\leq t\leq T}(\|u\|_{L^2}+\|u\|_{L^{\infty}})\leq C.
\end{equation*}	
\end{lemma}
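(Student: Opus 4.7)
The plan is to build a coupled differential inequality for
$G(t):=\mu\|u_x\|_{L^2}^2+\nu\|b_x\|_{L^2}^2+\|b-\bar{b}\|_{L^4}^4$,
whose dissipation produces the time-integrated bounds for $\|\sqrt{\rho}\,\dot{u}\|_{L^2}^2$, $\nu^2\|b_{xx}\|_{L^2}^2$, $\mu\nu\|(b-\bar{b})b_x\|_{L^2}^2$ and $\|b-\bar{b}\|_{L^6}^6$, and to close it by Gronwall using Lemmas \ref{lemmac1}--\ref{lemmac3}. First, I multiply the momentum equation $\rho\dot{u}=\mu u_{xx}-(P(\rho)+b^2/2)_x$ by $\dot{u}$ and integrate. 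Integration by parts turns the viscous contribution into $\tfrac{\mu}{2}\tfrac{d}{dt}\|u_x\|_{L^2}^2$ plus the cubic $-\tfrac{\mu}{2}\!\int u_x^3\,dx$, while the pressure/magnetic part, via $\rho_t=-(\rho u)_x$ and the $b$-equation, reorganizes into the time derivative of a lower-order functional $\Psi$ plus controllable error integrals. The cubic $\int u_x^3$ is handled through the identity $\mu u_{xx}=\rho\dot{u}+(\pi+\tfrac12(b^2-\bar{b}^2))_x$ with $\pi:=P(\rho)-P(\bar{\rho})$ and the 1D embedding $\|u_x\|_{L^\infty}\le C\|u_x\|_{L^2}^{1/2}\|u_{xx}\|_{L^2}^{1/2}$, yielding
\begin{equation*}
\tfrac{d}{dt}\bigl(\tfrac{\mu}{2}\|u_x\|_{L^2}^2+\Psi\bigr)+\|\sqrt{\rho}\,\dot{u}\|_{L^2}^2\le C(1+\|u_x\|_{L^2}^2)(1+G(t))+\tfrac{1}{4\mu}\|b-\bar{b}\|_{L^6}^6,
\end{equation*}
where the $L^6$ piece will be cancelled by a surplus produced in Step 2.

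Next, testing the magnetic equation by $(b-\bar{b})^3$ produces, after integration by parts,
\begin{equation*}
\tfrac14\tfrac{d}{dt}\|b-\bar{b}\|_{L^4}^4+3\nu\|(b-\bar{b})b_x\|_{L^2}^2=-\tfrac34\!\int u_x(b-\bar{b})^4\,dx-\bar{b}\!\int u_x(b-\bar{b})^3\,dx.
\end{equation*}
The essential move is to eliminate $u_x$ via the effective viscous flux $\mu u_x=F+\pi+\tfrac12(b^2-\bar{b}^2)$: expanding $(b^2-\bar{b}^2)(b-\bar{b})^4=(b-\bar{b})^6+2\bar{b}(b-\bar{b})^5$ extracts a coercive $-\tfrac{3}{8\mu}\|b-\bar{b}\|_{L^6}^6$ on the right, which I transfer to the left. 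The remaining $\int F(b-\bar{b})^k$ and $\int \pi(b-\bar{b})^k$ are dominated via $\|F\|_{L^\infty}^2\le C\|F\|_{L^2}\|F_x\|_{L^2}=C\|F\|_{L^2}\|\rho\dot{u}\|_{L^2}\le C\|F\|_{L^2}\|\sqrt{\rho}\,\dot{u}\|_{L^2}$, with $\|F\|_{L^2}\le C(1+\|u_x\|_{L^2}+\|b-\bar{b}\|_{L^4}^2)$ and $\|\pi\|_{L^2}\le C\|\rho-\bar{\rho}\|_{L^2}\le C$ by Lemma \ref{lemmac3}.

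Third, testing the magnetic equation by $-\nu b_{xx}$ gives
\begin{equation*}
\tfrac{\nu}{2}\tfrac{d}{dt}\|b_x\|_{L^2}^2+\nu^2\|b_{xx}\|_{L^2}^2=\nu\!\int(ub_x+bu_x)b_{xx}\,dx.
\end{equation*}
After integration by parts and splitting $b=(b-\bar{b})+\bar{b}$, the cross terms reduce to $\nu\|u_x\|_{L^\infty}\|b_x\|_{L^2}^2$, $\nu\bar{b}^2\|u_x\|_{L^2}^2$ and $\int u_x^2(b-\bar{b})^2\,dx$; for the last, substituting $\mu u_x=F+\pi+\tfrac12(b^2-\bar{b}^2)$ once more decomposes it into pieces controllable by the already-derived bounds on $F$, $\pi$, $\|b-\bar{b}\|_{L^4}$ and $\|b-\bar{b}\|_{L^\infty}\le C\|b-\bar{b}\|_{L^2}^{1/2}\|b_x\|_{L^2}^{1/2}$, with factors of $\nu$ balanced against $\nu^2\|b_{xx}\|_{L^2}^2$ through Young's inequality.

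Summing the three inequalities yields
\begin{equation*}
G'(t)+\|\sqrt{\rho}\,\dot{u}\|_{L^2}^2+\nu^2\|b_{xx}\|_{L^2}^2+\mu\nu\|(b-\bar{b})b_x\|_{L^2}^2+\tfrac{1}{8\mu}\|b-\bar{b}\|_{L^6}^6\le C(1+\|u_x\|_{L^2}^2)(1+G(t)),
\end{equation*}
and Gronwall together with $\int_0^T\|u_x\|_{L^2}^2\,dt\le C$ from Lemma \ref{lemmac1} delivers every asserted bound; $\|u\|_{L^2}\le C$ and $\|u\|_{L^\infty}\le C$ follow at once from \eqref{c12-1} and \eqref{c13}. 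I expect the decisive step to be the effective-viscous-flux manipulation of Step 2: without the coercive $\|b-\bar{b}\|_{L^6}^6$ it produces, the term $\int u_x(b-\bar{b})^4$ has no $\nu$-independent dissipative partner, so this algebraic trick is precisely what keeps the whole estimate uniform as $\nu\to0$.
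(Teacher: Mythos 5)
Your overall architecture coincides with the paper's: the same three coupled estimates (momentum equation tested by $\dot{u}$, magnetic equation tested by $(b-\bar{b})^3$, magnetic equation tested by $\nu b_{xx}$), the same extraction of the coercive $\frac{3}{8\mu}\|b-\bar{b}\|_{L^6}^6$ by substituting the effective viscous flux for $u_x$ in Step~2 (which you rightly single out as the decisive, $\nu$-uniform ingredient), and the same Gronwall closure. However, your treatment of the cubic term $\int u_x^3$ in Step~1 has a genuine gap. You propose to use $\|u_x\|_{L^\infty}\le C\|u_x\|_{L^2}^{1/2}\|u_{xx}\|_{L^2}^{1/2}$ together with $\mu u_{xx}=\rho\dot{u}+\bigl(\pi+\tfrac12(b^2-\bar{b}^2)\bigr)_x$. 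Expanding the right-hand side, $\|u_{xx}\|_{L^2}$ is controlled only in terms of $\|\sqrt{\rho}\dot{u}\|_{L^2}$, $\|\rho_x\|_{L^2}$ and $\|b b_x\|_{L^2}$; but at this stage of the argument neither $\|\rho_x\|_{L^2}$ nor $\|b_x\|_{L^2}$ (without a factor of $\nu$) is available uniformly in $\nu$ --- those bounds are only obtained in Lemma~\ref{lemmac5}, which itself relies on the present lemma. The alternative of keeping $u_x=\frac{1}{\mu}(F+\pi+\tfrac12(b^2-\bar b^2))$ and taking $L^\infty$ norms fares no better, since $\|b-\bar{b}\|_{L^\infty}\le C\|b-\bar{b}\|_{L^2}^{1/2}\|b_x\|_{L^2}^{1/2}$ again brings in an unweighted $\|b_x\|_{L^2}$, which cannot be absorbed into $C(1+\|u_x\|_{L^2}^2)(1+G(t))$ as $\nu\to 0$. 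So the Gronwall inequality does not close as you have written it.

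The paper's actual device at this point is to stay at the $L^3$ level rather than $L^\infty$: it interpolates $\|F\|_{L^3}\le\|F\|_{L^2}^{5/6}\|F_x\|_{L^2}^{1/6}$ with $F_x=\rho\dot{u}$ (no spatial derivatives of $\rho$ or $b$ appear), and then
$\|u_x\|_{L^3}\le C\bigl(\|F\|_{L^3}+\|\rho-\bar{\rho}\|_{L^3}+\|b-\bar{b}\|_{L^6}^2+\|b-\bar{b}\|_{L^3}\bigr)$,
so that $\|u_x\|_{L^3}^3\le\varepsilon\|\sqrt{\rho}\dot{u}\|_{L^2}^2+C(\|u_x\|_{L^2}^4+\|b-\bar{b}\|_{L^6}^6+1)$, every term of which is either absorbed by the dissipation or by the Gronwall quantity. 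If you replace your $L^\infty$ step by this $L^3$ interpolation of the effective viscous flux, the rest of your plan goes through exactly as in the paper.
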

\begin{proof}The proof of Lemma \ref{lemmac4} will be divided into four steps.
	
	\underline{Step 1.}
Multiplying the equation $\eqref{a2}_2$ by $\dot{u}$ and integrating the resulting equation over $\mathbb{R}$ with respect to $x$ yields
\begin{equation}\label{c29}
\begin{aligned}
\frac{\mu}{2}&\frac{d}{dt}\int u_x^2+\int \rho \dot{u}^2\\
&=-\mu \int u_x(uu_x)_x-\int P(\rho)_x(u_t+uu_x)-\int \left(\frac{b^2}{2}\right)_x(u_t+uu_x)\\
&=:J_1+J_2+J_3.
\end{aligned}
\end{equation}
Firstly, by integration by parts, we find
\begin{equation}\label{c30}
\begin{aligned}
J_1&=-\mu \int u_x^3-\mu \int u\left(\frac{u_x^2}{2}\right)_x\\
&=-\mu \int u_x^3+\mu \int \frac{u_x^2}{2}\cdot u_x\\
&=-\frac{\mu}{2}\int u_x^3.
\end{aligned}
\end{equation}
Similarly, we have
\begin{equation}\label{c31}
\begin{aligned}
	J_2&=-\int \left(P(\rho)-P(\bar{\rho})\right)_x(u_t+uu_x)\\
	&=\frac{d}{dt}\int \left(P(\rho)-P(\bar{\rho})\right)u_x-\int\left[ \left(P(\rho)-P(\bar{\rho})\right)_t+ \left(P(\rho)-P(\bar{\rho})\right)_x u\right]u_x\\
	&=\frac{d}{dt}\int \left(P(\rho)-P(\bar{\rho})\right)u_x+\gamma\int \rho^\gamma u_x^2,
\end{aligned}
\end{equation}
and
\begin{equation}\label{c32}
\begin{aligned}
J_3&=-\int b(b-\bar{b})_x(u_t+uu_x)\\
&=-\int \left(\frac{(b-\bar{b})^2}{2}\right)_x(u_t+uu_x)-\bar{b}\int (b-\bar{b})_x(u_t+uu_x)\\
&=\frac{d}{dt}\int \left(\frac{(b-\bar{b})^2}{2}+\bar{b}(b-\bar{b})\right)u_x-\int \left(\left(\frac{(b-\bar{b})^2}{2}\right)_t+\left(\frac{(b-\bar{b})^2}{2}\right)_x u\right)u_x\\
&\quad -\bar{b}\int \left((b-\bar{b})_t+u(b-\bar{b})_x\right)u_x\\
&=\frac{d}{dt}\int \left(\frac{(b-\bar{b})^2}{2}+\bar{b}(b-\bar{b})\right)u_x-\int \left((b-\bar{b})+\bar{b}\right)(\nu b_{xx}-bu_x)u_x,
\end{aligned}
\end{equation}
where we have used the following facts:
$$(P(\rho)-P(\bar{\rho}))_t+u(P(\rho)-P(\bar{\rho}))_x+\gamma \rho ^\gamma u_x=0,$$
$$\left(\frac{(b-\bar{b})^2}{2}\right)_t+u\left(\frac{(b-\bar{b})^2}{2}\right)_x+b(b-\bar{b})u_x=\nu b_{xx}(b-\bar{b}),$$
and
$$(b-\bar{b})_t+u(b-\bar{b})_x+b u_x=\nu b_{xx}.$$
Substituting \eqref{c30}-\eqref{c32} into \eqref{c29}, using  H\"{o}lder's and Cauchy-Schwarz's inequalities, we have
\begin{equation}\label{c33}
\begin{aligned}
\frac{d}{dt}&\int \frac{\mu}{2}u_x^2-\left(P(\rho)-P(\bar\rho)\right)u_x-\frac{(b-\bar{b})^2}{2}u_x-\bar{b}(b-\bar{b})u_x dx+\int \rho \dot{u}^2\\
&=-\frac{\mu}{2}\int u_x^3-\gamma\int \rho^\gamma u_x^2 -\nu \int b_{xx}(b-\bar{b})u_x+\int (b-\bar{b})^2 u_x^2\\
&\quad+2\bar{b}\int (b-\bar{b})u_x^2-\bar{b}\int \nu b_{xx}u_x+\bar{b}^2\int u_x^2\\
&\leq C\big(\|u_x\|_{L^3}^3+\|u_x\|_{L^2}^2+\nu \|b_{xx}\|_{L^2}\|b-\bar{b}\|_{L^6}\|u_x\|_{L^3}+\|u_x\|_{L^3}^2\|b-\bar{b}\|_{L^6}^2\\
&\quad +\|u_x\|_{L^3}^2\|b-\bar{b}\|_{L^3}+\nu \|b_{xx}\|_{L^2}\|u_x\|_{L^2}\big)\\
&\leq \varepsilon \nu^2 \|b_{xx}\|_{L^2}^2+C\left(\|u_x\|_{L^3}^3+\|b-\bar{b}\|_{L^6}^6 +(\|b-\bar{b}\|_{L^2}^{\frac{1}{2}}\|b-\bar{b}\|_{L^6}^{\frac{1}{2}})^3+\|u_x\|_{L^2}^2\right)\\
&\leq \varepsilon \nu^2 \|b_{xx}\|_{L^2}^2+C\left(\|u_x\|_{L^3}^3+\|b-\bar{b}\|_{L^6}^6+\|u_x\|_{L^2}^2+1\right).
\end{aligned}
\end{equation}
Now, we estimate the term $\|u_x\|_{L^3}^3$ on the right-hand side of the equation \eqref{c33}.
Employing the effective viscous flux, momentum equation, Lemma \ref{lemmac1} and Lemma \ref{lemmac3}, we have
\begin{equation}\label{c35}
\begin{aligned}
\|F\|_{L^3}&\leq\|F\|^{\frac{5}{6}}_{L^2}\|F_x\|^{\frac{1}{6}}_{L^2}\\
&\leq C\left\|\left[\mu u_x-\left(P(\rho)-P(\bar{\rho})\right)-\left(\frac{(b-\bar{b})^2}{2}+\bar{b}(b-\bar{b})\right) \right]\right\|^{\frac{5}{6}}_{L^2}\|\rho \dot{u}\|^{\frac{1}{6}}_{L^2}\\
&\leq C\left(\|u_x\|_{L^2}+\|\rho-\bar{\rho}\|_{L^2}+\|b-\bar{b}\|_{L^4}^2+\|b-\bar{b}\|_{L^2}\right)^{\frac{5}{6}}\|\rho \dot{u}\|^{\frac{1}{6}}_{L^2}\\
&\leq C\left(\|u_x\|_{L^2}+\|b-\bar{b}\|_{L^2}^{\frac{1}{2}}\|b-\bar{b}\|_{L^6}^{\frac{3}{2}}+1\right)^{\frac{5}{6}}\|\rho \dot{u}\|^{\frac{1}{6}}_{L^2}\\
&\leq C\left(\|u_x\|_{L^2}+\|b-\bar{b}\|_{L^6}^{\frac{3}{2}}+1\right)^{\frac{5}{6}}\|\sqrt{\rho} \dot{u}\|^{\frac{1}{6}}_{L^2}.
\end{aligned}
\end{equation}
Moreover, we can obtain
\begin{equation}\label{c38}
\begin{aligned}
\|u_x\|_{L^3}&=\left\|\frac{F+P(\rho)-P(\bar{\rho})+\frac{b^2-\bar{b}^2}{2}}{\mu}\right\|_{L^3}\\
&\leq C\left(\|F\|_{L^3}+\|\rho-\bar{\rho}\|_{L^3}+\left\|\frac{(b-\bar{b})^2}{2}+\bar{b}(b-\bar{b})\right\|_{L^3}\right)\\
&\leq C\left(\|F\|_{L^3}+\|\rho-\bar{\rho}\|_{L^2}^{\frac{2}{3}}\|\rho-\bar{\rho}\|_{L^\infty}^{\frac{1}{3}}+\|b-\bar{b}\|_{L^6}^2+\|b-\bar{b}\|_{L^3}\right)\\
&\leq C\left(\left(\|u_x\|_{L^2}+\|b-\bar{b}\|_{L^6}^{\frac{3}{2}}+1\right)^{\frac{5}{6}}\|\sqrt{\rho} \dot{u}\|^{\frac{1}{6}}_{L^2}+\|b-\bar{b}\|_{L^6}^2+1\right),
\end{aligned}
\end{equation}
which implies
\begin{equation}\label{c39}
\begin{aligned}
\|u_x\|_{L^3}^3&\leq C\left(\left(\left(\|u_x\|_{L^2}+\|b-\bar{b}\|_{L^6}^{\frac{3}{2}}+1\right)^{\frac{5}{6}}\|\sqrt{\rho}\dot{u}\|_{L^2}^{\frac{1}{6}}\right)^3+\|b-\bar{b}\|_{L^6}^6+1\right)\\
&\leq \varepsilon\|\sqrt{\rho}\dot{u}\|_{L^2}^2+C\left(\|u_x\|_{L^2}^4+\|b-\bar{b}\|_{L^6}^6+1\right).
\end{aligned}
\end{equation}
Then, substituting \eqref{c39} into \eqref{c33} and choosing $\varepsilon$ sufficiently small, we have
\begin{equation}\label{c40}
\begin{aligned}
\frac{d}{dt}&\int \frac{\mu}{2}u_x^2-\left(P(\rho)-P(\bar{\rho})\right)u_x-\frac{(b-\bar{b})^2}{2}u_x-\bar{b}(b-\bar{b})u_x +\int \rho \dot{u}^2\\
&\leq \varepsilon\nu^2 \|b_{xx}\|_{L^2}^2+\varepsilon\|\sqrt{\rho}\dot{u}\|_{L^2}^2+C\left(\|u_x\|_{L^2}^4+\|b-\bar{b}\|_{L^6}^6+1\right).
\end{aligned}
\end{equation}

\underline{Step 2.} To control the term $\|b-\bar{b}\|_{L^6}^6$ on the right-hand side of \eqref{c40}, we rewrite the magnetic field equation as
$$(b-\bar{b})_t+u(b-\bar{b})_x+(b-\bar{b})u_x+\bar{b}u_x=\nu b_{xx}.$$
Then multiplying  the above equation by $(b-\bar{b})^3$ and integrating it over $\mathbb{R}$ with respect to $x$, we have
\begin{equation*}\label{c41}
\begin{aligned}
\frac{1}{4}&\frac{d}{dt}\int (b-\bar{b})^4+3\nu\int(b-\bar{b})^2 b_x^2=-\int\left(\frac{3}{4} (b-\bar{b})^4 +\bar{b} (b-\bar{b})^3\right) u_x \\
&=-\int\left(\frac{3}{4} (b-\bar{b})^4 +\bar{b} (b-\bar{b})^3\right)\frac{F+P(\rho)-P(\bar{\rho})+\frac{b^2-\bar{b}^2}{2}}{\mu}\\
&=-\int\left(\frac{3}{4} (b-\bar{b})^4 +\bar{b} (b-\bar{b})^3\right)\frac{F+P(\rho)-P(\bar{\rho})+\frac{(b-\bar{b})^2}{2}+\bar{b}(b-\bar{b})}{\mu}\\
&=-\frac{3}{4\mu}\int (b-\bar{b})^4\left(F+P(\rho)-P(\bar{\rho})\right)-\frac{\bar{b}}{\mu}\int (b-\bar{b})^3\left(F+P(\rho)-P(\bar{\rho})\right)\\
&\quad-\frac{3}{8\mu}\int (b-\bar{b})^6-\frac{5\bar{b}}{4\mu}\int (b-\bar{b})^5-\frac{\bar{b}^2}{\mu}\int (b-\bar{b})^4,
\end{aligned}
\end{equation*}
which gives
\begin{equation}\label{c42}
\begin{aligned}
&\frac{1}{4}\frac{d}{dt}\int(b-\bar{b})^4+\frac{3}{8\mu}\int (b-\bar{b})^6+\frac{\bar{b}^2}{\mu}\int (b-\bar{b})^4+3\nu \int (b-\bar{b})^2b_x^2\\
&=-\frac{3}{4\mu}\int (b-\bar{b})^4 \left(F+P(\rho)-P(\bar{\rho})\right)-\frac{\bar{b}}{\mu}\int (b-\bar{b})^3\left(F+P(\rho)-P(\bar{\rho})\right)-\frac{5\bar{b}}{4\mu}\int (b-\bar{b})^5\\
&=:K_1+K_2+K_3,
\end{aligned}
\end{equation}
Next, we estimate $K_1-K_3$ term by term. Using \eqref{c35}, Lemmas \ref{lemmac3} and Young's inequality, we have
\begin{equation}\label{c43}
\begin{aligned}
K_1&\leq C\|b-\bar{b}\|_{L^6}^4\left(\|F\|_{L^3}+\|P(\rho)-P(\bar{\rho})\|_{L^3}\right)\\
&\leq C\|b-\bar{b}\|_{L^6}^4\left(\left(\|u_x\|_{L^2}+\|b-\bar{b}\|_{L^6}^{\frac{3}{2}}+1\right)^{\frac{5}{6}}\|\sqrt{\rho}\dot{u}\|_{L^2}^{\frac{1}{6}}+\|\rho-\bar{\rho}\|_{L^3}\right)\\
&\leq \varepsilon\|b-\bar{b}\|_{L^6}^6+\varepsilon \|\sqrt{\rho}\dot{u}\|_{L^2}^2+C(1+\|u_x\|_{L^2}^4).
\end{aligned}
\end{equation}
Similarly, we can get
\begin{equation}\label{c44}
\begin{aligned}
K_2&\leq \|b-\bar{b}\|_{L^6}^3\left(\|F\|_{L^2}+\|P(\rho)-P(\bar{\rho})\|_{L^2}\right)\\
&\leq \|b-\bar{b}\|_{L^6}^3\left(\|u_x\|_{L^2}+\|\rho-\bar{\rho}\|_{L^2}+\|b-\bar{b}\|^{2}_{L^4}+\|b-\bar{b}\|_{L^2}\right)\\
&\leq \|b-\bar{b}\|_{L^6}^3\left(\|u_x\|_{L^2}+\|\rho-\bar{\rho}\|_{L^2}+\|b-\bar{b}\|^{\frac{1}{2}}_{L^2}\|b-\bar{b}\|^{\frac{3}{2}}_{L^6}+\|b-\bar{b}\|_{L^2}\right)\\
&\leq \varepsilon\|b-\bar{b}\|_{L^6}^6+C\left(\|u_x\|_{L^2}^4+1\right),
\end{aligned}
\end{equation}
and
\begin{equation}\label{c45}
\begin{aligned}
K_3&\leq C\|b-\bar{b}\|_{L^5}^5\leq C\|b-\bar{b}\|_{L^2}^{\frac{1}{2}}\|b-\bar{b}\|_{L^6}^{\frac{9}{2}}\leq \varepsilon\|b-\bar{b}\|_{L^6}^6+C.
\end{aligned}
\end{equation}
Then, substituting \eqref{c43}-\eqref{c45} into \eqref{c42} and choosing $\varepsilon>0$ small enough yields
\begin{equation}\label{c46}
\begin{aligned}
\frac{1}{4}\frac{d}{dt}&\int(b-\bar{b})^4+\frac{3}{8\mu}\int (b-\bar{b})^6+\frac{\bar{b}^2}{\mu}\int (b-\bar{b})^4+3\nu \int (b-\bar{b})^2b_x^2\\
&\leq \varepsilon\|\sqrt{\rho}\dot{u}\|_{L^2}^2+C\left(1+\|u_x\|_{L^2}^4\right).
\end{aligned}
\end{equation}

\underline{Step 3.}  To control the term $\nu^2 \|b_{xx}\|_{L^2}^2$ on the right hand-side of \eqref{c40}, we multiply the equation $\eqref{a2}_3$
by $\nu b_{xx}$ and integrate it over $\mathbb{R}$ to get
\begin{equation}\label{c47}
\begin{aligned}
\frac{\nu}{2}\frac{d}{dt}\int b_x^2+\nu^2\int b_{xx}^2&=\nu \int b_{xx}ub_x+\nu \int b_{xx}(b-\bar{b})u_x+\bar{b}\nu\int b_{xx}u_x\\
&=-\frac{\nu}{2}\int b_x^2 u_x+\nu \int b_{xx}(b-\bar{b})u_x+\bar{b}\nu\int b_{xx}u_x\\
&=:H_1+H_2+H_3.
\end{aligned}
\end{equation}
For the term $H_1$, by the effective viscous flux, it shows that
\begin{equation}\label{c48}
\begin{aligned}
H_1&=-\frac{\nu}{2}\int b_x^2\frac{F+P(\rho)-P(\bar{\rho})+\frac{b^2-\bar{b}^2}{2}}{\mu}\\
&\leq -\frac{\nu}{2\mu}\int b_x^2 F+\frac{\nu}{2\mu}\int b_x^2\left(P(\bar{\rho})+\frac{\bar{b}^2}{2}\right)\\
&\leq C\left(1+\|F\|_{L^\infty}\right)\nu \|b_x\|_{L^2}^2\\
&\leq C\left(1+\|\sqrt{\rho}\dot{u}\|_{L^2}+\|u_x\|_{L^2}+\|b-\bar{b}\|_{L^6}^{\frac{3}{2}}\right)\nu \|b_x\|_{L^2}^2\\
&\leq \varepsilon \|\sqrt{\rho}\dot{u}\|_{L^2}^2+ \|b-\bar{b}\|_{L^6}^{6}+C\left(1+\|u_x\|_{L^2}^2+\nu \|b_x\|_{L^2}^2\right)\nu \|b_x\|_{L^2}^2,
\end{aligned}
\end{equation}
where we have used the following inequality:
\begin{equation}\label{c49}
\begin{aligned}
\|F\|_{L^\infty}&\leq C\|F\|_{L^2}^{\frac{1}{2}}\|F_x\|_{L^2}^{\frac{1}{2}}\\
&\leq C\left(\|u_x\|_{L^2}+\|\rho-\bar{\rho}\|_{L^2}+\|b-\bar{b}\|_{L^4}+\|b-\bar{b}\|_{L^2}\right)^{\frac{1}{2}}\|\sqrt{\rho}\dot{u}\|_{L^2}^{\frac{1}{2}}\\
	&\leq C\left(1+\|\sqrt{\rho}\dot{u}\|_{L^2}+\|u_x\|_{L^2}+\|b-\bar{b}\|_{L^6}^{\frac{3}{2}}\right).
\end{aligned}
\end{equation}
For the terms $H_2$ and $H_3$, using H\"{o}lder's inequality, Cauchy's inequality and \eqref{c39}, we have
\begin{equation}\label{c50}
\begin{aligned}
H_2&\leq \nu \|b_{xx}\|_{L^2}\|b-\bar{b}\|_{L^6}\|u_x\|_{L^3}\\
&\leq \varepsilon \nu^2\|b_{xx}\|_{L^2}^2+C\left(\|b-\bar{b}\|_{L^6}^6+\|u_x\|_{L^3}^3\right)\\
&\leq \varepsilon \nu^2\|b_{xx}\|_{L^2}^2+\varepsilon \|\sqrt{\rho}\dot{u}\|_{L^2}^2+C\left(1+\|u_x\|_{L^2}^4+\|b-\bar{b}\|_{L^6}^6\right),
\end{aligned}
\end{equation}
and
\begin{equation}\label{c51-1}
\begin{aligned}
H_3&\leq C\nu \|b_{xx}\|_{L^2}\|u_x\|_{L^2}\leq \varepsilon\nu^2\|b_{xx}\|_{L^2}^2+C\|u_x\|_{L^2}^2.
\end{aligned}
\end{equation}
Thus, substituting \eqref{c48}, \eqref{c50} and \eqref{c51-1} into \eqref{c47} and choosing $\varepsilon>0$ small enough, we have
\begin{equation}\label{c52}
\begin{aligned}
\frac{\nu}{2}\frac{d}{dt}\int b_x^2+\nu^2\int b_{xx}^2&\leq 2\varepsilon\|\sqrt{\rho}\dot{u}\|_{L^2}^2+C\left(1+\|u_x\|_{L^2}^2+\nu\|b_x\|_{L^2}^2\right)\nu \|b_x\|_{L^2}^2\\
&\quad +C\left(1+\|u_x\|_{L^2}^4+\|b-\bar{b}\|_{L^6}^6\right).
\end{aligned}
\end{equation}

\underline{Step 4.} Adding the equation \eqref{c46} multiplied by $\frac{8\mu}{3}(2C+1)$ and \eqref{c40} into \eqref{c52}, we have
\begin{equation}\label{c53}
\begin{aligned}
\frac{d}{dt}&\int\left( \frac{\mu}{2}u_x^2+\frac{\nu}{2}b_x^2+\frac{2\mu(2C+1)}{3}(b-\bar{b})^4-\psi(\rho,u,b)\right)+\int \rho \dot{u}^2+\nu^2\int b_{xx}^2\\
&\quad +\int\left( (b-\bar{b})^6+\frac{8\bar{b}^2(2C+1)}{3} (b-\bar{b})^4+8\mu\nu(2C+1) (b-\bar{b})^2b_x^2\right)\\
&\leq C+C\left(1+\|u_x\|_{L^2}^2+\nu\|b_x\|_{L^2}^2\right)\left(\|u_x\|_{L^2}^2+\nu\|b_x\|_{L^2}^2\right),
\end{aligned}
\end{equation}
where $\psi(\rho,u,b)=\left(P(\rho)-P(\bar{\rho})\right)u_x+\frac{(b-\bar{b})^2}{2}u_x+\bar{b}(b-\bar{b})u_x.$
Integrating \eqref{c53} over $[0,T]$ with respect to $t$, we obtain
\begin{equation}\label{c54}
\begin{aligned}
\int&\left(\mu u_x^2+\nu b_x^2+(b-\bar{b})^4\right)+\int_0^T\int \left(\rho \dot{u}^2+(b-\bar{b})^6+\mu\nu(b-\bar{b})^2b_x^2+\nu^2 b_{xx}^2\right)\\
&\leq C+C\int_0^T\left(1+\|u_x\|_{L^2}^2+\nu\|b_x\|_{L^2}^2\right)\left(\|u_x\|_{L^2}^2+\nu\|b_x\|_{L^2}^2\right)\\
&\quad+\int \psi(\rho,u,b)-\int \psi(\rho_0,u_0,b_0).
\end{aligned}
\end{equation}
By Lemma \ref{lemmac1}, Lemma \ref{lemmac3} and Cauchy-Schwarz's inequality, we can obtain
\begin{equation*}\label{c51}
\begin{aligned}
&\int \psi(\rho,u,b)-\int \psi(\rho_0,u_0,b_0)\\
&\leq C\left(\|P(\rho)-P(\bar{\rho})\|_{L^2}+\|b-\bar{b}\|_{L^4}^2+\|b-\bar{b}\|_{L^2}\right)\|u_x\|_{L^2}+C\\
&\leq C\left(\|\rho -\bar{\rho}\|_{L^2}+\|b-\bar{b}\|_{L^4}^2+1\right)\|u_x\|_{L^2}+C\\
&\leq \varepsilon\mu\|u_x\|_{L^2}^2+C(1+\|b-\bar{b}\|_{L^4}^4),
\end{aligned}
\end{equation*}
which together with  \eqref{c54} gives (choosing $\varepsilon>0$ small enough)
\begin{equation}\label{c56}
\begin{aligned}
\int&\left(\mu u_x^2+\nu b_x^2+(b-\bar{b})^4\right)+\int_0^T\int \left(\rho \dot{u}^2+(b-\bar{b})^6+\mu\nu(b-\bar{b})^2b_x^2+\nu^2 b_{xx}^2\right)\\
&\leq C(1+\|b-\bar{b}\|_{L^4}^4)+C\int_0^T\left(1+\|u_x\|_{L^2}^2+\nu\|b_x\|_{L^2}^2\right)\left(\|u_x\|_{L^2}^2+\nu\|b_x\|_{L^2}^2\right).
\end{aligned}
\end{equation}

Now, integrating \eqref{c46} over $[0,T]$ with respect to $t$, and then adding the resulting inequality multiplied by $4C$ into \eqref{c56} show that
\begin{equation*}\label{c57}
\begin{aligned}
\int&\left(\mu u_x^2+\nu b_x^2+(b-\bar{b})^4\right)+\int_0^T\int \left(\rho \dot{u}^2+(b-\bar{b})^6+\mu\nu(b-\bar{b})^2b_x^2+\nu^2 b_{xx}^2\right)\\
&\leq C+C\int_0^T\left(1+\|u_x\|_{L^2}^2+\nu\|b_x\|_{L^2}^2\right)\left(\|u_x\|_{L^2}^2+\nu\|b_x\|_{L^2}^2\right),
\end{aligned}
\end{equation*}
which together with Gronwall's inequality and Lemma \ref{lemmac1} yields
\begin{equation*}
\begin{aligned}
\int \mu u_x^2&+\nu b_x^2+(b-\bar{b})^4dx+\int_0^T\int (b-\bar{b})^6+\mu\nu(b-\bar{b})^2b_x^2 dxdt\\
&+\int_0^T\int \nu^2 b_{xx}^2 dxdt +\int_0^T\int \rho \dot{u}^2 dxdt \leq C(T).
\end{aligned}
\end{equation*}	
Thus, it follows from  \eqref{c12-1} and \eqref{c13} that
\begin{equation*}
\|u\|_{L^2}^2\leq  C(1+\|u_x\|_{L^2}^2)\leq  C,
\end{equation*}
and
\begin{equation*}
\|u\|_{L^\infty}\leq C\|u\|_{L^2}^{\frac{1}{2}}\|u_x\|_{L^2}^{\frac{1}{2}}\leq C(1+\|u_x\|_{L^2})\leq  C.
\end{equation*}
Then, we complete the proof of Lemma \ref{lemmac4}.
\end{proof}

To get the uniform upper bound of the magnetic field $b$, we need to re-estimate the $\|b_x\|_{L^2}$ independent of $\nu$ as follows.
\begin{lemma}\label{lemmac5}
	Let $(\rho,u,b)$  be a smooth solution of \eqref{a2}-\eqref{a2-1}.
	Then for any $T>0$, it holds that
	$$\sup_{0\leq t\leq T}\left(\|b\|_{L^\infty(\mathbb{R})}+ \|\rho_x\|_{L^2(\mathbb{R})}^2+\|b_x\|_{L^2(\mathbb{R})}^2\right)+\int_0^T\left(\mu\| u_{xx}\|_{L^2(\mathbb{R})}^2+\nu \|b_{xx}\|_{L^2(\mathbb{R})}^2\right)dt\leq C.$$
\end{lemma}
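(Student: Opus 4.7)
The strategy is to derive coupled energy-type estimates for $\|\rho_x\|_{L^2}^2$ and $\|b_x\|_{L^2}^2$, then extract $\|b\|_{L^\infty}$ from 1D Sobolev embedding, and finally read $\int_0^T \|u_{xx}\|_{L^2}^2\,dt$ off the momentum equation viewed as an elliptic equation in $u$. The pivotal identity, obtained from $\eqref{a2}_2$ after using continuity to cancel the $\rho_t$ contribution, is
\[
\mu u_{xx} = \rho\dot u + P'(\rho)\rho_x + b b_x,
\]
which lets $\|u_{xx}\|_{L^2}$ be controlled by $\|\sqrt\rho\dot u\|_{L^2}$, $\|\rho_x\|_{L^2}$, and $\|b\|_{L^\infty}\|b_x\|_{L^2}$, since $\rho$ is bounded by Lemma \ref{lemmac3}.

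First I would differentiate the continuity equation in $x$, test with $\rho_x$, and substitute the identity above to obtain
\[
\frac{1}{2}\frac{d}{dt}\|\rho_x\|_{L^2}^2 + \frac{1}{\mu}\int \rho P'(\rho)\rho_x^2\,dx = -\frac{3}{2}\int u_x\rho_x^2\,dx - \frac{1}{\mu}\int \rho^2\dot u\,\rho_x\,dx - \frac{1}{\mu}\int \rho b\, b_x \rho_x\,dx.
\]
The analogous computation on $\eqref{a2}_3$, using the same substitution, gives
\[
\frac{1}{2}\frac{d}{dt}\|b_x\|_{L^2}^2 + \nu\|b_{xx}\|_{L^2}^2 + \frac{1}{\mu}\int b^2 b_x^2\,dx = -\frac{3}{2}\int u_x b_x^2\,dx - \frac{1}{\mu}\int \rho b\dot u\,b_x\,dx - \frac{1}{\mu}\int b P'(\rho)\rho_x b_x\,dx.
\]
Adding these and handling the symmetric $\rho_x b_x$ cross terms by Cauchy--Schwarz against the two pointwise damping terms on the left, and the $\dot u$ terms by H\"older against $\|\sqrt\rho\dot u\|_{L^2}$ (which lies in $L^2(0,T)$ by Lemma \ref{lemmac4}), I am left with the contributions $\|u_x\|_{L^\infty}(\|\rho_x\|_{L^2}^2+\|b_x\|_{L^2}^2)$ and $\|b\|_{L^\infty}\|u_{xx}\|_{L^2}\|b_x\|_{L^2}$ to absorb.

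The principal obstacle is closing the loop between $\|u_x\|_{L^\infty}$, $\|b\|_{L^\infty}$ and the unknown norms $\|\rho_x\|_{L^2}$, $\|b_x\|_{L^2}$. I would resolve this via the 1D Gagliardo--Nirenberg bounds
\[
\|b\|_{L^\infty}^2\leq 2\bar b^2 + C\|b-\bar b\|_{L^4}^{4/3}\|b_x\|_{L^2}^{2/3},\qquad \|u_x\|_{L^\infty}\leq C\|u_x\|_{L^2}^{1/2}\|u_{xx}\|_{L^2}^{1/2},
\]
combined with the $L^\infty_t L^4_x$ bound on $b-\bar b$ from Lemma \ref{lemmac4}, inserting the momentum identity for $\|u_{xx}\|_{L^2}$ and then using Young's inequality to absorb every remaining $\|u_{xx}\|_{L^2}^2$ into the damping and every $\nu\|b_{xx}\|_{L^2}^2$ into the left-hand side; the coefficient $\nu$ sitting naturally on $\|b_{xx}\|_{L^2}^2$ is exactly what produces the claimed $\nu$-uniform bound $\int_0^T\nu\|b_{xx}\|_{L^2}^2\,dt\leq C$. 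A Gronwall argument, using $\int_0^T\|\sqrt\rho\dot u\|_{L^2}^2\,dt\leq C$ from Lemma \ref{lemmac4} as the forcing, then gives $\sup_{[0,T]}(\|\rho_x\|_{L^2}^2+\|b_x\|_{L^2}^2)\leq C$ independently of $\nu$; the Sobolev embedding immediately supplies $\|b\|_{L^\infty}\leq C$; and substituting back into the momentum identity, squaring in $L^2_x$, and integrating in time closes $\int_0^T\|u_{xx}\|_{L^2}^2\,dt\leq C$, which completes the proof.
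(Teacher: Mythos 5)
Your overall architecture (couple the $\rho_x$ and $b_x$ energy identities, use the momentum equation to express $u_{xx}$, recover $\|b\|_{L^\infty}$ at the end by Sobolev embedding, and read off $\int_0^T\|u_{xx}\|_{L^2}^2dt$ last) matches the paper's, and your two energy identities are algebraically correct. But there is a genuine gap in how you close the Gronwall argument, concentrated in the convection terms $-\tfrac32\int u_x\rho_x^2$ and $-\tfrac32\int u_xb_x^2$. You bound these by $\|u_x\|_{L^\infty}\,y$ with $y=\|\rho_x\|_{L^2}^2+\|b_x\|_{L^2}^2$ and $\|u_x\|_{L^\infty}\le C\|u_x\|_{L^2}^{1/2}\|u_{xx}\|_{L^2}^{1/2}$, then propose to ``absorb every remaining $\|u_{xx}\|_{L^2}^2$ into the damping.'' There is no such damping in your scheme: by substituting $\mu u_{xx}=\rho\dot u+P'(\rho)\rho_x+bb_x$ everywhere you have eliminated the term $\mu\int u_{xx}^2$ from the left-hand side (the paper keeps it there by adding the identity $\mu\int u_{xx}^2=\int\rho\dot u\,u_{xx}+\gamma\int\rho^{\gamma-1}\rho_xu_{xx}+\int bb_xu_{xx}$ to the $b_x$-identity, which simultaneously cancels the cross term $\int bb_xu_{xx}$). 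Your only left-hand-side terms are $\nu\|b_{xx}\|_{L^2}^2$ and the degenerate weights $\int\rho P'(\rho)\rho_x^2$, $\int b^2b_x^2$, none of which controls $\|u_{xx}\|_{L^2}^2$. Tracing your substitution through, $\|u_{xx}\|_{L^2}^{1/2}\lesssim\|\sqrt\rho\dot u\|_{L^2}^{1/2}+\|\rho_x\|_{L^2}^{1/2}+(1+\|b_x\|_{L^2}^{1/3})^{1/2}\|b_x\|_{L^2}^{1/2}$, so the convection terms produce contributions of order $y^{5/4}$ and $y^{4/3}$ with coefficients that are not small and not time-integrable; a differential inequality $y'\le Cy^{4/3}$ does not yield a global bound. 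The paper avoids this entirely by writing $u_x=\mu^{-1}\bigl(F+P(\rho)-P(\bar\rho)+\tfrac{b^2-\bar b^2}{2}\bigr)$, discarding the nonnegative parts $P(\rho)$ and $\tfrac{b^2}{2}$ by sign, and bounding $\|F\|_{L^\infty}$ through $F_x=\rho\dot u$ (estimate \eqref{c49}), so the Gronwall coefficient is $1+\|\sqrt\rho\dot u\|_{L^2}^2+\|b-\bar b\|_{L^6}^6\in L^1(0,T)$ by Lemma \ref{lemmac4} and does not involve the unknown norms at all.

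A second, related defect is the claim that the cross terms $\tfrac1\mu\int\rho b\,b_x\rho_x$ and $\tfrac1\mu\int bP'(\rho)\rho_xb_x$ can be handled ``by Cauchy--Schwarz against the two pointwise damping terms.'' The weights $\rho P'(\rho)=\gamma\rho^{\gamma}$ and $b^2$ vanish on the vacuum set and at zeros of $b$ (both are permitted here), and the pointwise ratios one needs, $\rho/P'(\rho)=\rho^{2-\gamma}/\gamma$ and $P'(\rho)/\rho=\gamma\rho^{\gamma-2}$, are bounded only for $\gamma\le2$ and $\gamma\ge2$ respectively, so at least one of the two absorptions fails for general $\gamma>1$. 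Bounding these cross terms instead by $\|b\|_{L^\infty}\|\rho_x\|_{L^2}\|b_x\|_{L^2}$ reintroduces $\|b\|_{L^\infty}\lesssim1+\|b_x\|_{L^2}^{1/3}$ and again yields a superlinear term $y^{7/6}$. The cleanest repair is the paper's: keep $\mu\int u_{xx}^2$ on the left so that $\int\rho\dot u\,u_{xx}$ and $\gamma\int\rho^{\gamma-1}\rho_xu_{xx}$ are absorbed by Young's inequality, let the $\int bb_xu_{xx}$ terms cancel between the two identities, and control the convection terms through the effective viscous flux.
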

\begin{proof}
	Differentiating the equality $\eqref{a2}_3$ with respect to $x$,  then multiplying the resulting equation by $b_x$ and integrating by parts over $R$, we have
	\begin{equation}\label{c58}
	\begin{aligned}
	\frac{1}{2}\frac{d}{dt}\int b_x^2+\nu\int b_{xx}^2&=-2\int b_x^2 u_x-\int b b_x u_{xx}-\int u b_x b_{xx}\\
	&=-\frac{3}{2}\int b_x^2 u_x-\int b b_x u_{xx}.
	\end{aligned}
	\end{equation}
To control the second term on the right-hand side of \eqref{c58}, multiplying the momentum equation $\eqref{a2}_2$ by $u_{xx}$ gives
\begin{equation}\label{c59}
\mu\int u_{xx}^2=\int \rho \dot{u}u_{xx}+\gamma\int \rho^{\gamma-1}\rho_x u_{xx}+\int bb_x u_{xx}.
\end{equation}
Combining \eqref{c58} and \eqref{c59} yields
\begin{equation}\label{c60}
\begin{aligned}
\frac{1}{2}&\frac{d}{dt}\int b_x^2+\nu\int b_{xx}^2+\mu\int u_{xx}^2=-\frac{3}{2}\int b_x^2 u_x+\int \rho \dot{u} u_{xx}+\gamma\int \rho^{\gamma-1}\rho_x u_{xx}\\
&=-\frac{3}{2}\int b_x^2 \frac{F+P(\rho)-P(\bar{\rho})+\frac{b^2-\bar{b}^2}{2}}{\mu}+\int \rho \dot{u} u_{xx}+\gamma\int \rho^{\gamma-1}\rho_x u_{xx}\\
&\leq -\frac{3}{2}\int b_x^2 \frac{F-P(\bar{\rho})+\frac{-\bar{b}^2}{2}}{\mu}+\int \rho \dot{u} u_{xx}+\gamma\int \rho^{\gamma-1}\rho_x u_{xx}\\
&\leq C\left(\|F\|_{L^\infty}\|b_x\|_{L^2}^2+\|b_x\|_{L^2}^2+\|\sqrt{\rho}\dot{u}\|_{L^2}\|u_{xx}\|_{L^2}+\|\rho_x\|_{L^2}\|u_{xx}\|_{L^2}\right)\\
&\leq \varepsilon\|u_{xx}\|_{L^2}^2+C\left(\|F\|_{L^\infty}+1\right)\|b_x\|_{L^2}^2+C\left(\|\rho_x\|_{L^2}^2+\|\sqrt{\rho}\dot{u}\|_{L^2}^2\right)\\
&\leq\varepsilon\|u_{xx}\|_{L^2}^2+C\left(\|\sqrt{\rho}\dot{u}\|_{L^2}^2+\|b-\bar{b}\|_{L^6}^{6}+1\right)\|b_x\|_{L^2}^2+C\left(\|\rho_x\|_{L^2}^2+\|\sqrt{\rho}\dot{u}\|_{L^2}^2\right),
\end{aligned}
\end{equation}	
where we have used \eqref{c49} and Lemma \ref{lemmac4}.	

Next, differentiating the density equation $ \eqref{a2}_1$ with respect to $x$, multiplying the resulting equation by $\rho_x$, and integrating it over $\mathbb{R}$ implies that
\begin{equation}\label{c61}
\begin{aligned}
\frac{1}{2}\frac{d}{dt}\int \rho_x^2&=-2\int \rho_x^2 u_x-\int u\rho_x\rho_{xx}-\int \rho \rho_x u_{xx}\\
&=-\frac{3}{2}\int \rho_x^2 u_x-\int \rho \rho_x u_{xx}\\
&=-\frac{3}{2}\int \rho_x^2\frac{F+P(\rho)-P(\bar{\rho})+\frac{b^2-\bar{b}^2}{2}}{\mu}-\int \rho \rho_x u_{xx}\\
&\leq -\frac{3}{2\mu}\int \rho_x^2 F+\frac{3}{2\mu}\int \left(P(\bar{\rho})+\frac{\bar{b}^2}{2}\right)\rho_x^2+C\|\rho_x\|_{L^2}\|u_{xx}\|_{L^2}\\
&\leq \varepsilon\|u_{xx}\|_{L^2}^2+ C\left(\|F\|_{L^\infty}+1\right)\|\rho_x\|_{L^2}^2\\
&\leq  \varepsilon\|u_{xx}\|_{L^2}^2+ C\left(\|\sqrt{\rho}\dot{u}\|_{L^2}^2+\|b-\bar{b}\|_{L^6}^{6}+1\right)\|\rho_x\|_{L^2}^2,
\end{aligned}
\end{equation}	
where we have used \eqref{c49} and Lemma \ref{lemmac4}.	
Then adding the \eqref{c61} into \eqref{c60}, we have
	\begin{equation*}
	\begin{aligned}
	\frac{1}{2}\frac{d}{dt}\int \left(b_x^2+\rho_x^2\right)+\nu\int b_{xx}^2+\mu\int u_{xx}^2\leq C\left(\|\sqrt{\rho}\dot{u}\|_{L^2}^2+\|b-\bar{b}\|_{L^6}^{6}+1\right)\left(1+\|\rho_x\|_{L^2}^2+\|b_x\|_{L^2}^2\right),
	\end{aligned}
	\end{equation*}
which together Gronwall's inequality and Lemma \ref{lemmac4} gives
	\begin{equation*}\label{c62}
	\int\left( \rho_x^2+b_x^2\right)+\int_0^T\int \left(\nu b_{xx}^2+\mu u_{xx}^2\right)\leq C(T).	
	\end{equation*}
Thus, it follows from Lemma \ref{lemmac1} and the G-N inequality \eqref{b1-1} that
$$\|b\|_{L^\infty}\leq\|b-\bar{b}\|_{L^\infty}+\bar{b}\leq C\|b-\bar{b}\|_{L^2}^{\frac{1}{2}}\|b_x\|_{L^2}^{\frac{1}{2}}+\bar{b}\leq C(T).$$		
Thus, the proof of Lemma \ref{lemmac5} is finished.
\end{proof}	

With the help of the Lemma \ref{lemmac5}, we can get the estimates of the first order derivative with respect to $t$ of the density and the magnetic field, respectively.
	\begin{lemma}\label{lemmac6}
	Let $(\rho,u,b)$  be a smooth solution to \eqref{a2}-\eqref{a2-1}.
	Then for any $T>0$, it holds that	
		$$\sup_{0\leq t\leq T}\|\rho_t\|_{L^2(\mathbb{R})}+\|b_t\|_{L^2(0,T; L^2(\mathbb{R}))}\leq C.$$
		\end{lemma}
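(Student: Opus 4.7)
The plan is to read the two bounds directly off the equations $\eqref{a2}_1$ and $\eqref{a2}_3$, using the estimates already collected in Lemmas \ref{lemmac3}--\ref{lemmac5}. No new differential inequality or Gronwall argument is needed.

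First I would estimate $\|\rho_t\|_{L^2}$. From the continuity equation $\rho_t=-(\rho u)_x=-u\rho_x-\rho u_x$, the triangle inequality gives
\begin{equation*}
\|\rho_t\|_{L^2}\leq \|u\|_{L^\infty}\|\rho_x\|_{L^2}+\|\rho\|_{L^\infty}\|u_x\|_{L^2}.
\end{equation*}
Lemma \ref{lemmac3} bounds $\|\rho\|_{L^\infty}$, Lemma \ref{lemmac4} bounds $\|u\|_{L^\infty}$ and $\|u_x\|_{L^2}$, and Lemma \ref{lemmac5} bounds $\|\rho_x\|_{L^2}$, uniformly in $t\in[0,T]$ and independently of $\nu$. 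Taking the supremum in $t$ yields $\sup_{0\le t\le T}\|\rho_t\|_{L^2}\le C$.

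Next I would estimate $b_t$ in $L^2(0,T;L^2)$. Rewriting $\eqref{a2}_3$ as $b_t=-u_xb-ub_x+\nu b_{xx}$, we get
\begin{equation*}
\int_0^T\!\|b_t\|_{L^2}^2\,dt
\;\le\; C\int_0^T\!\Bigl(\|b\|_{L^\infty}^2\|u_x\|_{L^2}^2+\|u\|_{L^\infty}^2\|b_x\|_{L^2}^2+\nu^2\|b_{xx}\|_{L^2}^2\Bigr)dt.
\end{equation*}
The first two integrands are pointwise bounded in $t$ by Lemmas \ref{lemmac4}--\ref{lemmac5} (which give $\|b\|_{L^\infty}$, $\|u\|_{L^\infty}$, $\|u_x\|_{L^2}$ and $\|b_x\|_{L^2}$ uniformly bounded), so their time integrals are $\le CT$. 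The last integral is controlled by $\int_0^T\nu^2\|b_{xx}\|_{L^2}^2dt\le C$, which is exactly one of the bounds already established in Lemma \ref{lemmac4}. Combining gives $\|b_t\|_{L^2(0,T;L^2)}\le C$.

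There is no genuine obstacle here: every quantity on the right-hand side of the continuity and magnetic equations has been previously estimated in a $\nu$-independent way, so the only work is to collect the pieces and invoke the triangle inequality. The one small point worth flagging is that one should use $\int_0^T\nu^2\|b_{xx}\|_{L^2}^2dt\le C$ from Lemma \ref{lemmac4} (not merely $\nu\|b_{xx}\|_{L^2}^2$), which is the correct form for bounding $\|\nu b_{xx}\|_{L^2(0,T;L^2)}^2$.
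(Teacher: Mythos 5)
Your proposal is correct and follows essentially the same route as the paper: both read the bounds directly off the continuity and magnetic equations, apply the triangle and H\"older inequalities, and invoke the uniform bounds on $\|\rho\|_{L^\infty}$, $\|u\|_{L^\infty}$, $\|u_x\|_{L^2}$, $\|\rho_x\|_{L^2}$, $\|b\|_{L^\infty}$, $\|b_x\|_{L^2}$ from Lemmas \ref{lemmac3}--\ref{lemmac5} together with $\int_0^T\nu^2\|b_{xx}\|_{L^2}^2\,dt\le C$ from Lemma \ref{lemmac4}. Your remark about needing the $\nu^2\|b_{xx}\|_{L^2}^2$ form of the estimate is exactly the point the paper uses implicitly in concluding $\|b_t\|_{L^2}\le C(1+\|\nu b_{xx}\|_{L^2})\in L^2(0,T)$.
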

	\begin{proof}
		By direct calculation, we have
		\begin{equation*}\label{c63}
		\begin{aligned}
		\|\rho_t\|_{L^2}&\leq C\left(\|\rho u_x\|_{L^2}+\|u\rho_x\|_{L^2}\right)\\
		&\leq C\left(\|\rho\|_{L^\infty}\|u_x\|_{L^2}+\|u\|_{L^\infty}\|\rho_x\|_{L^2}\right)\in L^\infty(0,T),
		\end{aligned}
		\end{equation*}
		and
		\begin{equation*}\label{c64}
		\begin{aligned}
		\|b_t\|_{L^2}&\leq C\left(\|bu_x\|_{L^2}+\|ub_x\|_{L^2}+\|\nu b_{xx}\|_{L^2}\right)\\
		&\leq C\left(\|b\|_{L^\infty}\|u_x\|_{L^2}+\|u\|_{L^\infty}\|b_x\|_{L^2}+\|\nu b_{xx}\|_{L^2}\right)\\
		&\leq C\left(1+\|\nu b_{xx}\|_{L^2}\right)\in L^2(0,T),
		\end{aligned}
		\end{equation*}
		where we have used Lemma \ref{lemmac4} and Lemma \ref{lemmac5}.
	\end{proof}

The following estimates of the second order derivative of the velocity plays an important role in the analysis of the non-resistive limit.
\begin{lemma}\label{lemmac7}
	Let $(\rho,u,b)$  be a smooth solution to \eqref{a2}-\eqref{a2-1}.
	Then for any $T>0$, it holds that
	$$\sup_{0\leq t\leq T}\|\sqrt{\rho} \dot{u}\|_{L^2(\mathbb{R})}+\int_0^T\mu\| u_{xt}\|_{L^2(\mathbb{R})}^2\leq C,$$
and
$$\sup_{0\leq t\leq T}\left(\|u_{xx}\|_{L^2(\mathbb{R})}+\| \sqrt{\rho}u_{t}\|_{L^2(\mathbb{R})}\right)\leq C.$$
	\end{lemma}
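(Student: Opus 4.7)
The strategy is the classical higher-order energy estimate: apply the material derivative $\partial_t + u\partial_x$ to the momentum equation written in the form
\begin{equation*}
\rho\dot u = \mu u_{xx} - (P(\rho))_x - \tfrac{1}{2}(b^2)_x,
\end{equation*}
multiply by $\dot u$, and integrate over $\mathbb R$. Because the continuity equation gives $\rho_t+(\rho u)_x=0$, one checks by a standard manipulation that
\begin{equation*}
\int \rho\bigl(\dot u_t+u\dot u_x\bigr)\dot u\,dx=\tfrac{1}{2}\tfrac{d}{dt}\int\rho\dot u^2\,dx,
\end{equation*}
so after integration by parts the computation will yield an identity of the form $\tfrac{1}{2}\tfrac{d}{dt}\int\rho\dot u^2\,dx+\mu\int\dot u_x^2\,dx=\mathrm{RHS}$.

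Next I would bound the RHS. It splits into three families: (i) commutator terms like $\mu\int u_x\dot u_x^2$ and $\mu\int u_x u_{xx}\dot u_x$ generated by $(\partial_t+u\partial_x)$ hitting $\mu u_{xx}$; (ii) pressure terms $\int \gamma\rho^{\gamma}u_x\dot u_x$ arising after using the continuity equation to replace $\rho_t$; and (iii) magnetic terms from $(\partial_t+u\partial_x)(b^2)_x$, which after invoking the equation $b_t=-ub_x-bu_x+\nu b_{xx}$ reduce to contributions of the shape $\int b^2u_x\dot u_x$ and $\nu\int bb_{xx}\dot u_x$. Each contribution is controlled by Young's inequality together with the Gagliardo--Nirenberg interpolation $\|u_x\|_{L^\infty}\le C\|u_x\|_{L^2}^{1/2}\|u_{xx}\|_{L^2}^{1/2}$, the $L^\infty$ bounds on $\rho,u,b$ from Lemmas \ref{lemmac3}--\ref{lemmac5}, and the $L^\infty_tL^2_x$ control on $\rho_x,b_x$ from Lemma \ref{lemmac5}; the $\mu\int\dot u_x^2$ piece is absorbed. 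What survives after this is an inequality of the shape
\begin{equation*}
\frac{d}{dt}\int\rho\dot u^2\,dx+\mu\int\dot u_x^2\,dx\le C\bigl(1+\|u_{xx}\|_{L^2}^2+\|b_t\|_{L^2}^2\bigr)\int\rho\dot u^2\,dx+C\bigl(1+\|u_{xx}\|_{L^2}^2+\|b_t\|_{L^2}^2+\nu^2\|b_{xx}\|_{L^2}^2\bigr).
\end{equation*}
Since $\int_0^T\|u_{xx}\|_{L^2}^2\,dt$, $\int_0^T\|b_t\|_{L^2}^2\,dt$ and $\int_0^T\nu^2\|b_{xx}\|_{L^2}^2\,dt$ are all uniformly bounded by Lemmas \ref{lemmac4}--\ref{lemmac6}, Gronwall's lemma yields $\sup_{0\le t\le T}\|\sqrt\rho\dot u\|_{L^2}^2+\int_0^T\|\dot u_x\|_{L^2}^2\,dt\le C$, granted the initial value $\|\sqrt{\rho_0}\dot u(\cdot,0)\|_{L^2}$ is controlled by the compatibility condition \eqref{a33} through $\dot u|_{t=0}=g$.

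The remaining bounds follow algebraically. Solving the momentum equation for $u_{xx}$ gives $\mu u_{xx}=\rho\dot u+\gamma\rho^{\gamma-1}\rho_x+bb_x$, whence $\|u_{xx}\|_{L^2}\le \sqrt{\|\rho\|_{L^\infty}}\|\sqrt\rho\dot u\|_{L^2}+C\|\rho_x\|_{L^2}+\|b\|_{L^\infty}\|b_x\|_{L^2}\le C$. From $u_t=\dot u-uu_x$ one has $\|\sqrt\rho u_t\|_{L^2}\le\|\sqrt\rho\dot u\|_{L^2}+\sqrt{\|\rho\|_{L^\infty}}\|u\|_{L^\infty}\|u_x\|_{L^2}\le C$, while $u_{xt}=\dot u_x-u_x^2-uu_{xx}$ gives $\int_0^T\|u_{xt}\|_{L^2}^2\,dt\le C\int_0^T\bigl(\|\dot u_x\|_{L^2}^2+\|u_x\|_{L^4}^4+\|u\|_{L^\infty}^2\|u_{xx}\|_{L^2}^2\bigr)\,dt\le C$, the last two controlled by Lemma \ref{lemmac5} together with Gagliardo--Nirenberg.

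The main obstacle I anticipate is the magnetic contribution. The material derivative applied to $(b^2/2)_x$ produces terms where a factor of $\nu b_{xx}$ emerges through $b_t$, and one must control them in a $\nu$-independent fashion. The device is to pair $\nu^{1/2}b_{xx}$ with itself using $\int_0^T\nu^2\|b_{xx}\|_{L^2}^2\,dt\le C$ from Lemma \ref{lemmac4} and to pair the remaining $\nu^{1/2}$-weighted factor with $\dot u_x$ via Young's inequality so that no negative power of $\nu$ appears. This is the structural reason we needed the strong dissipative bound $\nu^2\|b_{xx}\|_{L^2}^2\in L^1(0,T)$ from Lemma \ref{lemmac4} rather than merely $\nu\|b_{xx}\|_{L^2}^2\in L^1$, and it is the delicate point of the proof.
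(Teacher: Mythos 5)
Your proposal is correct in outline and arrives at the same energy functional as the paper, but via a slightly different operator: the paper simply differentiates the momentum equation in $t$, multiplies by $\dot u$, and obtains the dissipation $\mu\int u_{xt}^2$ directly (its only viscous commutator is $-\mu\int u_{xt}(u_x^2+uu_{xx})$), whereas you apply the full material derivative and obtain $\mu\int\dot u_x^2$, recovering $\int_0^T\|u_{xt}\|_{L^2}^2\,dt$ only at the end through $u_{xt}=\dot u_x-u_x^2-uu_{xx}$. The two routes are equivalent here and draw on exactly the same inputs (Lemmas \ref{lemmac4}--\ref{lemmac6} for the $L^2(0,T)$ control of $\|u_{xx}\|_{L^2}$, $\|\rho_t\|_{L^2}$, $\|b_t\|_{L^2}$, and the compatibility condition \eqref{a33} for the initial value of $\sqrt{\rho}\dot u$); the paper's version is marginally cleaner because differentiating only in $t$ never produces $u_{xxx}$. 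One caution: the commutator you list as $\mu\int u_x\dot u_x^2$ would \emph{not} be controllable if it actually appeared --- it is bounded only by $\mu\|u_x\|_{L^\infty}\|\dot u_x\|_{L^2}^2$, which multiplies the dissipation by a factor that is neither small nor attached to the energy $\int\rho\dot u^2$, so it can be neither absorbed nor Gronwalled. Fortunately it does not arise: integrating $\int(\mu u_{xxt}+\mu uu_{xxx})\dot u$ by parts and using $\dot u_x=u_{xt}+u_x^2+uu_{xx}$ leaves $-\mu\int\dot u_x^2$ plus commutators of the form $\mu\int u_x^2\dot u_x$ and $\mu\int u_xu_{xx}\dot u$, each carrying only one factor of $\dot u$ or $\dot u_x$, and these are handled exactly as you describe. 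Your discussion of the magnetic term, and of why the dissipative input must be $\nu^2\|b_{xx}\|_{L^2}^2\in L^1(0,T)$ rather than $\nu\|b_{xx}\|_{L^2}^2\in L^1(0,T)$, matches the paper's use of $\|b_t\|_{L^2(0,T;L^2)}\le C$ from Lemma \ref{lemmac6}.
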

\begin{proof}
	Differentiating the momentum equation $\eqref{a2}_2$ with respect to $t$, we have
	\begin{equation*}\label{c65}
	\rho _t\dot{u}+\rho \dot{u}_t+\left(P(\rho)+\frac{b^2}{2}\right)_{xt}=\mu u_{xxt}.
	\end{equation*}
Multiplying the above equation by $\dot{u}$ and integrating the resulting equation over $\mathbb{R}$ yields
	\begin{equation}\label{c66}
	\begin{aligned}
	\frac{1}{2}&\frac{d}{dt}\int \rho \dot{u}^2+\mu \int u_{xt}^2\\
	&=\frac{1}{2}\int \rho_t\dot{u}^2-\int \left(P(\rho)+\frac{b^2}{2}\right)_{xt}\dot{u}-\mu\int u_{xt}\left(u_x^2+uu_{xx}\right)\\
	&=:L_1+L_2+L_3.
	\end{aligned}
	\end{equation}
	Now, we estimate the terms $L_1-L_3$ as
	\begin{align}\label{c67}
	L_1&=-\frac{1}{2}\int (\rho u)_x\dot{u}^2=\int \rho u \dot{u}\dot{u}_x\notag\\
	&\leq C\|\sqrt{\rho}\dot{u}\|_{L^2}\|u\|_{L^\infty}\|\dot{u}_x\|_{L^2}\notag\\
	&\leq C\|\sqrt{\rho}\dot{u}\|_{L^2}\left(\|u_{xt}\|_{L^2}+\|u_x\|_{L^4}^2+\|u\|_{L^\infty}\|u_{xx}\|_{L^2}\right)\\
	&\leq C\|\sqrt{\rho}\dot{u}\|_{L^2}\left(\|u_{xt}\|_{L^2}+\|u_x\|_{L^2}^{\frac{3}{2}}\|u_{xx}\|_{L^2}^{\frac{1}{2}}+\|u_{xx}\|_{L^2}\right)\notag\\
	&\leq \varepsilon\|u_{xt}\|_{L^2}^2+C\left(\|\sqrt{\rho}\dot{u}\|_{L^2}^2+\|u_{xx}\|_{L^2}^2+1\right),\notag
	\end{align}
and	similarly, we also have
\begin{equation}\label{c68}
\begin{aligned}
L_2&=\int \left(P(\rho)+\frac{b^2}{2}\right)_t\dot{u}_x\\
&\leq C\|\rho^{\gamma-1}\rho_t+bb_t\|_{L^2}\|\dot{u}_x\|_{L^2}\\
&\leq C\left(\|\rho_t\|_{L^2}+\|b_t\|_{L^2}\right)\|u_{xt}+u_x^2+uu_{xx}\|_{L^2}\\
&\leq C\left(1+\|b_t\|_{L^2}\right)\left(\|u_{xt}\|_{L^2}+\|u_x\|_{L^2}^{\frac{3}{2}}\|u_{xx}\|_{L^2}^{\frac{1}{2}}+\|u\|_{L^\infty}\|u_{xx}\|_{L^2}\right)\\
&\leq \varepsilon\|u_{xt}\|_{L^2}^2+C\left(\|u_{xx}\|_{L^2}^2+\|b_t\|_{L^2}^2+1\right),
\end{aligned}
\end{equation}	
and
\begin{equation}\label{c69}
\begin{aligned}
L_3&\leq C\|u_{xt}\|_{L^2}\left(\|u_x\|_{L^4}^2+\|u\|_{L^\infty}\|u_{xx}\|_{L^2}\right)\\
&\leq \varepsilon\|u_{xt}\|_{L^2}^2+C\left(1+\|u_{xx}\|_{L^2}^2\right),
\end{aligned}
\end{equation}
where we have used the Gagliardo-Nirenberg inequality, Lemma \ref{lemmac4} and Lemma \ref{lemmac5}.
Thus, substituting \eqref{c67}-\eqref{c69} into \eqref{c66} and choosing $\varepsilon>0$ small enough, we have
\begin{equation*}
\frac{1}{2}\frac{d}{dt}\int \rho \dot{u}^2+\mu \int u_{xt}^2\leq C\left(1+\|\sqrt{\rho}\dot{u}\|_{L^2}^2+\|u_{xx}\|_{L^2}^2+\|b_t\|_{L^2}^2\right),
\end{equation*}
which combining with  Gronwall's inequality, Lemma \ref{lemmac4} and Lemma \ref{lemmac5} gives
$$\frac{1}{2}\int \rho \dot{u}^2+\int_0^T\int \mu u_{xt}^2\leq C(T).$$
This together with the momentum equation yields that
\begin{equation*}\label{c70}
\begin{aligned}
\mu \|u_{xx}\|_{L^2}&\leq C\left(\|\rho\dot{u}\|_{L^2}+\|\left(P(\rho)+\frac{b^2}{2}\right)_x\|_{L^2}\right)\\
&\leq C\left(\|\sqrt{\rho}\dot{u}\|_{L^2}+\|\rho_x\|_{L^2}+\|b_x\|_{L^2}\right)\leq C,
\end{aligned}
\end{equation*}
and
\begin{equation*}
\begin{aligned}
\|\sqrt{\rho}u_t\|_{L^2}&\leq C\|\sqrt{\rho}\dot{u}-\sqrt{\rho}uu_{x}\|_{L^2}\\
&\leq C\|\sqrt{\rho}\dot{u}\|_{L^2}+\|\sqrt{\rho}\|_{L^\infty}\|u\|_{L^\infty}\|u_{x}\|_{L^2}\leq C.
\end{aligned}
\end{equation*}
We complete the proof of Lemma \ref{lemmac7}.
\end{proof}

\section{Proof of the Theorem \ref{theorem1.1} }\label{s4}
This section is devoted to the proof of Theorem \ref{theorem1.1}. To do this, we also need the following global
a priori estimates of the solution $(\tilde{\rho},\tilde{u},\tilde{b})$ to the problem \eqref{a1} and \eqref{a1-1}.
\begin{proposition}\label{p4.1}
	Let  $(\tilde{\rho},\tilde{u},\tilde{b})$ be a smooth solution of \eqref{a1}, \eqref{a1-1} and \eqref{1.1}, then for some index $1<\alpha\leq 2$, we have
	 \begin{equation}\label{d1}
	0\leq \tilde{\rho} (x,t)\leq C,\ \ \ \forall (x,t)\in \mathbb{R}\times [0,T],\\
	\end{equation}
	\begin{equation}\label{d2}
	\begin{aligned}
	\sup_{0\leq t\leq T}&\left\|\left(\frac{1}{2}{\tilde{\rho}} \tilde{u}^2+\Phi(\tilde{\rho})+\frac{(\tilde{b}-\bar{b})^2}{2}\right)(1+|x|^\alpha)\right\|_{L^1} +
\int_0^T \mu\left\| \tilde{u}_x(1+|x|^{\frac{\alpha}{2}})\right\|_{L^2}^2dt\leq C,
	\end{aligned}
	\end{equation}
	and
	\begin{equation}\label{d3}
	\begin{aligned}
	\sup_{0\leq t\leq T}&\left(\|\tilde{u}\|_{H^2}+\|\tilde{b}_x\|_{L^2}+\|\tilde{\rho}_x\|_{L^2}+\|\sqrt{\tilde{\rho}}\dot{\tilde{u}}\|_{L^2}\right)+\int_0^T\mu\|\tilde{u}_{xt}\|_{L^2}^2\leq C.
\end{aligned}
\end{equation}
\end{proposition}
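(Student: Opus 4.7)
The plan is to prove Proposition \ref{p4.1} by carrying the $\nu$-independent arguments of Section \ref{s3} over to the non-resistive system, exploiting the fact that the computations in Lemmas \ref{lemmac1}--\ref{lemmac7} actually simplify when $\nu=0$: the magnetic dissipation terms vanish, and no term in the right-hand side ever \emph{required} $\nu>0$ to close. In effect, the same energy identities, with every occurrence of $\nu$ erased, will be used; one only needs to check that the newly hyperbolic magnetic equation $\tilde b_t+(\tilde u\tilde b)_x=0$ still allows one to control $\|\tilde b_x\|_{L^2}$ from the momentum equation via the effective viscous flux.

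First I would reproduce the elementary energy estimate by testing $\eqref{a1}_2$ against $\tilde u$, $\eqref{a1}_3$ against $(\tilde b-\bar b)$, and combining with the identity for $\Phi(\tilde\rho)$; since there is no $\nu b_{xx}$ term to handle, this is strictly easier than Lemma \ref{lemmac1}. The weighted estimate \eqref{d2} follows by testing against $\tilde u|x|^\alpha$ and $(\tilde b-\bar b)|x|^\alpha$ and repeating the estimates of $I_1,\dots,I_4$ in \eqref{c10}, with $I_5$ now absent. For the uniform upper bound \eqref{d1} on $\tilde\rho$, I would set $\tilde\xi=\int_{-\infty}^x\tilde\rho\tilde u\,dy$, derive $\frac{d}{dt}(\tilde\xi+\mu\ln\tilde\rho)\bigl(X(x,t),t\bigr)+\bigl(P(\tilde\rho)+\tfrac12\tilde b^2\bigr)-\bigl(P(\bar\rho)+\tfrac12\bar b^2\bigr)=0$ along particle trajectories as in \eqref{c22}--\eqref{c25}, and then apply exactly the same decomposition $\tilde\xi=\tilde\xi_1+\tilde\xi_2$ with the C--K--N weighted inequality and the Gagliardo--Nirenberg inequality to bound both pieces uniformly, as in \eqref{c27}--\eqref{c28}.

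The higher-order bounds \eqref{d3} follow the scheme of Lemmas \ref{lemmac4}--\ref{lemmac7}. Testing $\eqref{a1}_2$ against $\dot{\tilde u}$ gives, after using the transport identities for $P(\tilde\rho)-P(\bar\rho)$ and $(\tilde b-\bar b)^2/2$ (where now the right-hand side $\nu b_{xx}(b-\bar b)$ of \eqref{c32} is replaced by $0$), exactly the inequality \eqref{c40} without any $\nu^2\|b_{xx}\|_{L^2}^2$ term. The $L^6$ estimate on $\tilde b-\bar b$ obtained by testing the magnetic equation against $(\tilde b-\bar b)^3$ and using the effective viscous flux $\tilde F$ in place of $\tilde u_x$ works identically to \eqref{c42}--\eqref{c46}, and Gronwall then yields the bounds on $\mu\|\tilde u_x\|_{L^2}^2+\|\tilde b-\bar b\|_{L^4}^4$ and $\int_0^T\|\sqrt{\tilde\rho}\dot{\tilde u}\|_{L^2}^2\,dt$. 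Finally, mimicking Lemma \ref{lemmac7}, differentiating $\eqref{a1}_2$ in $t$ and testing against $\dot{\tilde u}$ produces the bounds on $\|\sqrt{\tilde\rho}\dot{\tilde u}\|_{L^\infty_tL^2_x}$ and $\int_0^T\mu\|\tilde u_{xt}\|_{L^2}^2\,dt$, after which the momentum equation gives $\|\tilde u_{xx}\|_{L^2}\le C$ and in particular $\tilde u\in L^\infty(0,T;H^2)$.

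The main obstacle will be the bound on $\|\tilde b_x\|_{L^2}$ and $\|\tilde\rho_x\|_{L^2}$ corresponding to Lemma \ref{lemmac5}, since there is no parabolic smoothing to lean on. I will proceed as there: differentiate $\eqref{a1}_3$ in $x$, multiply by $\tilde b_x$, integrate by parts, and absorb the dangerous term $-\int \tilde b\tilde b_x\tilde u_{xx}\,dx$ by multiplying the momentum equation by $\tilde u_{xx}$ and substituting. The resulting identity controls $\mu\|\tilde u_{xx}\|_{L^2}^2$ at the expense of $\|\sqrt{\tilde\rho}\dot{\tilde u}\|_{L^2}^2+\|\tilde\rho_x\|_{L^2}^2+\|\tilde F\|_{L^\infty}\|\tilde b_x\|_{L^2}^2$, and one closes simultaneously in $\|\tilde\rho_x\|_{L^2}^2+\|\tilde b_x\|_{L^2}^2$ via Gronwall, using the $L^\infty$ bound on $\tilde F$ derived from Lemma \ref{lemmac4}. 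This is precisely the point at which the absence of resistivity is felt, but since the argument in Lemma \ref{lemmac5} was already designed to be independent of $\nu$, it transfers without modification; the remaining claims in \eqref{d3}, including the $H^2$ regularity of $\tilde u$, then follow as in Lemmas \ref{lemmac6}--\ref{lemmac7}.
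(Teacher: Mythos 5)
Your proposal is correct and follows essentially the same route as the paper: the paper's own proof of Proposition \ref{p4.1} consists precisely of the remark that one repeats the arguments of Lemmas \ref{lemmac1}--\ref{lemmac7} step by step with $\nu=0$, all of which were already designed to be $\nu$-independent. Your elaboration of where this needs checking (in particular that the $\|\tilde b_x\|_{L^2}$ and $\|\tilde\rho_x\|_{L^2}$ bounds of Lemma \ref{lemmac5} close via the effective viscous flux and the momentum equation rather than via parabolic smoothing) is exactly the content the paper leaves implicit.
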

\begin{proof}
In fact, repeating the arguments in the proofs of Lemma \ref{lemmac1}-Lemma \ref{lemmac7} step by step, we can easily obtain \eqref{d1}- \eqref{d3}. Here, we omit the details.
\end{proof}

The existence and uniqueness of global strong solution in  Theorem \ref{theorem1.1} can be obtained from the local existence in time and the global (in time) a priori estimates in Proposition \ref{p4.1} by a standard continuity argument c.f. \cite{LWY2020} .

\section{Proof of the  Theorem \ref{theorem1.2} }\label{s5}
\textbf{\emph{ The existence and uniqueness of global strong solution in Theorem \ref{theorem1.2}:}}  The existence and uniqueness of global strong solution in  Theorem \ref{theorem1.2} can be obtained by the local existence in time and the global (in time) a priori estimates in Sections \ref{s3} by a standard continuity argument c.f. \cite{YL2019}.

\textbf{\emph{ The non-resistive limit in Theorem \ref{theorem1.2}:}} To justify the non-resistive limit as $\nu\rightarrow 0$, we consider the the difference of these
two solutions $(\rho-\tilde{\rho},u-\tilde{u},b-\tilde{b})$ which satisfy the following:
\begin{equation}\label{e1}
\begin{cases}
(\rho-\tilde{\rho})_t+(\rho-\tilde{\rho})u_x+\tilde{\rho}(u-\tilde{u})_x+(\rho-\tilde{\rho})_xu+\tilde{\rho}_x(u-\tilde{u})=0,\\
\rho(u-\tilde{u})_t+\rho u(u-\tilde{u})_x-\mu (u-\tilde{u})_{xx}=-(\rho-\tilde{\rho})(\tilde{u}_t+\tilde{u}\tilde{u}_x)-\rho(u-\tilde{u})\tilde{u}_x\\
\qquad\qquad\qquad-\left(P(\rho)-P(\tilde{\rho})\right)_x-\left(\frac{b^2-\tilde{b}^2}{2}\right)_x,\\
(b-\tilde{b})_t+u_x(b-\tilde{b})+\tilde{b}(u-\tilde{u})_x+u(b-\tilde{b})_x+(u-\tilde{u})\tilde{b}_x=\nu b_{xx}.
\end{cases}
\end{equation}

Firstly, multiplying the equation $\eqref{e1}_1$ by $2(\rho-\tilde{\rho})$, integrating the resultant over $\mathbb{R}$, and integrating by
parts, it follows from  H\"{o}lder's inequality, the Gagliardo-Nirenberg inequality, the Cauchy inequality, Lemma \ref{lemmac4} and Proposition \ref{p4.1} that
\begin{equation}\label{e2}
\begin{aligned}
&\frac{d}{dt}\int (\rho -\tilde{\rho})^2=-\int (\rho -\tilde{\rho})^2 u_x-2\int \tilde{\rho}(\rho -\tilde{\rho})(u-\tilde{u})_x-2\int \tilde{\rho}_x(\rho -\tilde{\rho})(u-\tilde{u})\\
&\leq \big(\|u_x\|_{L^\infty}\|\rho -\tilde{\rho}\|_{L^2}^2+\|\tilde{\rho}\|_{L^\infty}\|\rho -\tilde{\rho}\|_{L^2}\|(u-\tilde{u})_x\|_{L^2}+\|\tilde{\rho}_x\|_{L^2}\|\rho-\tilde{\rho}\|_{L^2}\|u-\tilde{u}\|_{L^\infty}\big)\\
&\leq C\left[\|u_x\|_{L^2}^{\frac{1}{2}}\|u_{xx}\|_{L^2}^{\frac{1}{2}}\|\rho-\tilde{\rho}\|_{L^2}^2+\left(\|(u-\tilde{u})_x\|_{L^2}+\|u-\tilde{u}\|_{L^2}^{\frac{1}{2}}\|(u-\tilde{u})_x\|_{L^2}^{\frac{1}{2}}\right)
\|\rho-\tilde{\rho}\|_{L^2}\right]\\
&\leq \varepsilon\|(u-\tilde{u})_x\|_{L^2}^2+C\left(1+\|u_{xx}\|_{L^2}\right)\left(\|\rho-\tilde{\rho}\|^2_{L^2}+1\right).
\end{aligned}
\end{equation}

Secondly, we multiply  $\eqref{e1}_2$ by $2(u-\tilde{u})$,  integrate the resultant over $\mathbb{R}$, integrate by
parts,  and use H\"{o}lder's inequality, the Gagliardo-Nirenberg inequality, Young's inequality,
the fact that $\|u-\tilde{u}\|^2_{L^2}\le C\left(1+\|(u-\tilde{u})_x\|_{L^2}^2\right)$ (see \eqref{c12-1}) and Proposition \ref{p4.1} to get
\begin{equation}\label{e3}
\begin{aligned}
&\frac{d}{dt}\int \rho (u-\tilde{u})^2+2\mu \int (u-\tilde{u})_x^2=-\int 2(\rho-\tilde{\rho})(u-\tilde{u})(\tilde{u}_t+\tilde{u}\tilde{u}_x)\\
&\quad-\int 2\rho (u-\tilde{u})^2\tilde{u}_x-\int2 \left(P(\rho)-P(\tilde{\rho})\right)_x(u-\tilde{u})-\int (b^2-\tilde{b}^2)_x(u-\tilde{u})\\
&\leq C\Big(\|\dot{\tilde{u}}\|_{L^2}\|\rho -\tilde{\rho}\|_{L^2}\|u-\tilde{u}\|_{L^\infty}+\|\tilde{u}_x\|_{L^\infty}\|\sqrt{\rho}(u-\tilde{u})\|_{L^2}^2\\
&\quad+(\|\rho-\tilde{\rho}\|_{L^2}+\|b-\tilde{b}\|_{L^2})\|(u-\tilde{u})_x\|_{L^2}\Big)\\
&\leq C\Big[\|\dot{\tilde{u}}\|_{L^2}\|u-\tilde{u}\|_{L^2}^{\frac{1}{2}}\|(u-\tilde{u})_x\|_{L^2}^{\frac{1}{2}}\|\rho-\tilde{\rho}\|_{L^2}+\|\tilde{u}_x\|_{L^2}^{\frac{1}{2}}\|\tilde{u}_{xx}\|_{L^2}^{\frac{1}{2}}\|\sqrt{\rho}(u-\tilde{u})\|_{L^2}^2\\
& \quad +(\|\rho-\tilde{\rho}\|_{L^2}+\|b-\tilde{b}\|_{L^2})\|(u-\tilde{u})_x\|_{L^2}\Big]\\
&\leq \varepsilon\|(u-\tilde{u})_x\|_{L^2}^2+C\left(1+\|\rho-\tilde{\rho}\|_{L^2}^2+\|b-\tilde{b}\|_{L^2}^2+\|\sqrt{\rho}(u-\tilde{u})\|_{L^2}^2\right)\\
&\quad\left(1+\|\dot{\tilde{u}}\|_{L^2}^2+\|\tilde{u}_{xx}\|_{L^2}\right).
\end{aligned}
\end{equation}

Thirdly, multiplying the equation $\eqref{e1}_3$ by $2(b-\tilde{b})$, integrating the resultant over $\mathbb{R}$, integrating by
parts and using Proposition \ref{p4.1}, the third equation of \eqref{a1}, the fact that $\|\tilde{b}\|_{L^\infty}\le C \|\tilde{b}\|^{1\over2}_{L^2}
\|\tilde{b}_x\|_{L^2}^{1\over2}$, we deduce
\begin{equation}\label{e4}
\begin{aligned}
&\frac{d}{dt}\int (b-\tilde{b})^2\\
&=-\int (b-\tilde{b})^2 u_x-2\int \tilde{b}(b-\tilde{b})(u-\tilde{u})_x-2\int (b-\tilde{b})(u-\tilde{u})\tilde{b}_x +2\nu \int (b-\tilde{b})b_{xx}\\
&\leq C\Big(\|u_x\|_{L^\infty}\|b-\tilde{b}\|_{L^2}^2+\|\tilde{b}\|_{L^\infty}\|b-\tilde{b}\|_{L^2}\|(u-\tilde{u})_x\|_{L^2}\\
&\quad+\|b-\tilde{b}\|_{L^2}\|\tilde{b}_x\|_{L^2}\|u-\tilde{u}\|_{L^\infty}+\nu\|b-\tilde{b}\|_{L^2}\|b_{xx}\|_{L^2}\Big)\\
&\le\varepsilon \|(u-\tilde{u})_x\|_{L^2}^2+C\left(1+\|u_{xx}\|_{L^2}\right)\|b-\tilde{b}\|_{L^2}^2+\nu^2\|b_{xx}\|_{L^2}^2.
\end{aligned}
\end{equation}
Then combining \eqref{e2}-\eqref{e4} and choosing $\varepsilon>0$ sufficiently small, we have
\begin{equation*}\label{e5}
\begin{aligned}
&\frac{d}{dt}\int \left((\rho-\tilde{\rho})^2+\rho (u-\tilde{u})^2+(b-\tilde{b})^2\right)+2\mu \int (u-\tilde{u})_x^2\\
&\leq C\left(1+\|u_{xx}\|_{L^2}+\|\tilde{u}_{xx}\|_{L^2}+\|\dot{\tilde{u}}\|_{L^2}^2\right)\left(\|\rho-\tilde{\rho}\|_{L^2}^2
+\|b-\tilde{b}\|_{L^2}^2+\|\sqrt{\rho}(u-\tilde{u})\|_{L^2}^2+1\right)\\
&\quad +\nu^2\|b_{xx}\|_{L^2}^2,
\end{aligned}
\end{equation*}
which together with  Gronwall's inequality, Lemma \ref{lemmac5} and Proposition \ref{p4.1} yields
\begin{equation}\label{e6}
\begin{aligned}
\int &\left((\rho-\tilde{\rho})^2+\rho (u-\tilde{u})^2+(b-\tilde{b})^2\right)+2\mu \int (u-\tilde{u})_x^2\\
&\leq C\nu^2\int_0^T\|b_{xx}\|_{L^2}^2dt\exp\left\{C\int_0^T\left(1+\|u_{xx}\|_{L^2}+\|\tilde{u}_{xx}\|_{L^2}+\|\dot{\tilde{u}}\|_{L^2}^2\right)dt\right\}\\
&\leq C\nu,
\end{aligned}
\end{equation}
where we have used Lemmas \ref{lemmac5} and \ref{lemmac7}, Proposition \ref{p4.1} and the following fact:
\begin{equation*}
\begin{aligned}
\|\dot{\tilde{u}}\|_{L^2}&\leq C\|\sqrt{\bar{\rho}}\dot{\tilde{u}}\|_{L^2}\leq C\left(\|(\sqrt{\bar{\rho}}-\sqrt{\tilde{\rho}})\tilde{u}\|_{L^2}+\|\sqrt{\tilde{\rho}}\tilde{u}\|_{L^2}\right)\\
&\leq C\left(\|\tilde{\rho}-\bar{\rho}\|_{L^2}\|\dot{\tilde{u}}\|_{L^\infty}+1\right)\\
&\leq \varepsilon\|\dot{\tilde{u}}\|_{L^2}+C(\|\dot{\tilde{u}}_x\|_{L^2}+1),\\
\end{aligned}
\end{equation*}
which gives
\begin{equation*}
\begin{aligned}
\|\dot{\tilde{u}}\|_{L^2}&\leq C(\|\left(\tilde{u}_t+\tilde{u}\tilde{u}_x\right)_x\|_{L^2}+1)\\
&\leq C\left(\|\tilde{u}_{xt}\|_{L^2}+\|\tilde{u}_x\|_{L^4}^2+\|\tilde{u}\tilde{u}_{xx}\|_{L^2}+1\right)\\
&\leq C\left(\|\tilde{u}_{xt}\|_{L^2}+\|\tilde{u}_{xx}\|_{L^2}+1\right)\in L^2(0,T).
\end{aligned}
\end{equation*}
Moreover, we have
\begin{equation*}
\begin{aligned}
\int \bar{\rho}(u-\tilde{u})^2&=\int (\rho-\bar{\rho})(u-\tilde{u})^2+\int \rho (u-\tilde{u})^2\\
&\leq C\|\rho-\bar{\rho}\|_{L^2}\|u-\tilde{u}\|_{L^4}^2+C\nu\\
&\leq C\|\rho-\bar{\rho}\|_{L^2}\|u-\tilde{u}\|_{L^2}^{\frac{3}{2}}\|(u-\tilde{u})_x\|_{L^2}^{\frac{1}{2}}+C\nu\\
&\leq \varepsilon\|u-\tilde{u}\|_{L^2}^2+C\|\rho-\bar{\rho}\|_{L^2}^4\|(u-\tilde{u})_x\|_{L^2}^2+C\nu\\
&\leq \varepsilon\|u-\tilde{u}\|_{L^2}^2+C\nu^2+C\nu
\end{aligned}
\end{equation*}
which together with \eqref{e6} yields that
\begin{equation}\label{e7}
\sup_{0\leq t\leq T}\|u-\tilde{u}\|_{L^2}^2\leq C\nu.
\end{equation}
Then we have proved the results of Theorem \ref{theorem1.2}.  \hspace{58mm}\qedsymbol

\section {Acknowledgments}
Z. Li is supported by the NSFC (No.11671319,
No.11931013), Fund of HPU (No.B2016-57).
H. Wang is supported by the National Natural Science Foundation of China (Grant No.11901066), the
Natural Science Foundation of Chongqing (Grant No.cstc2019jcyj-msxmX0167) and Project No.2019CDXYST0015 and No. 2020CDJQY-A040
supported by the Fundamental Research Funds for the Central Universities. Y. Ye is supported by the National Natural Science Foundation of China (No.11701145) and China Postdoctoral Science Foundation (No.2020M672196).

\end{document}